\journal{{\tt arXiv.org}}
\journal{{Journal of Computational Physics}}
\definecolor{greenyellow}   {cmyk}{0.15, 0   , 0.69, 0   }
\definecolor{yellow}        {cmyk}{0   , 0   , 1   , 0   }
\definecolor{goldenrod}     {cmyk}{0   , 0.10, 0.84, 0   }
\definecolor{dandelion}     {cmyk}{0   , 0.29, 0.84, 0   }
\definecolor{apricot}       {cmyk}{0   , 0.32, 0.52, 0   }
\definecolor{peach}         {cmyk}{0   , 0.50, 0.70, 0   }
\definecolor{melon}         {cmyk}{0   , 0.46, 0.50, 0   }
\definecolor{yelloworange}  {cmyk}{0   , 0.42, 1   , 0   }
\definecolor{orange}        {cmyk}{0   , 0.61, 0.87, 0   }
\definecolor{burntorange}   {cmyk}{0   , 0.51, 1   , 0   }
\definecolor{bittersweet}   {cmyk}{0   , 0.75, 1   , 0.24}
\definecolor{redorange}     {cmyk}{0   , 0.77, 0.87, 0   }
\definecolor{mahogany}      {cmyk}{0   , 0.85, 0.87, 0.35}
\definecolor{maroon}        {cmyk}{0   , 0.87, 0.68, 0.32}
\definecolor{brickred}      {cmyk}{0   , 0.89, 0.94, 0.28}
\definecolor{red}           {cmyk}{0   , 1   , 1   , 0   }
\definecolor{orangered}     {cmyk}{0   , 1   , 0.50, 0   }
\definecolor{rubinered}     {cmyk}{0   , 1   , 0.13, 0   }
\definecolor{wildstrawberry}{cmyk}{0   , 0.96, 0.39, 0   }
\definecolor{salmon}        {cmyk}{0   , 0.53, 0.38, 0   }
\definecolor{carnationpink} {cmyk}{0   , 0.63, 0   , 0   }
\definecolor{magenta}       {cmyk}{0   , 1   , 0   , 0   }
\definecolor{violetred}     {cmyk}{0   , 0.81, 0   , 0   }
\definecolor{rhodamine}     {cmyk}{0   , 0.82, 0   , 0   }
\definecolor{mulberry}      {cmyk}{0.34, 0.90, 0   , 0.02}
\definecolor{redviolet}     {cmyk}{0.07, 0.90, 0   , 0.34}
\definecolor{fuchsia}       {cmyk}{0.47, 0.91, 0   , 0.08}
\definecolor{lavender}      {cmyk}{0   , 0.48, 0   , 0   }
\definecolor{thistle}       {cmyk}{0.12, 0.59, 0   , 0   }
\definecolor{orchid}        {cmyk}{0.32, 0.64, 0   , 0   }
\definecolor{darkorchid}    {cmyk}{0.40, 0.80, 0.20, 0   }
\definecolor{purple}        {cmyk}{0.45, 0.86, 0   , 0   }
\definecolor{plum}          {cmyk}{0.50, 1   , 0   , 0   }
\definecolor{violet}        {cmyk}{0.79, 0.88, 0   , 0   }
\definecolor{royalpurple}   {cmyk}{0.75, 0.90, 0   , 0   }
\definecolor{blueviolet}    {cmyk}{0.86, 0.91, 0   , 0.04}
\definecolor{periwinkle}    {cmyk}{0.57, 0.55, 0   , 0   }
\definecolor{cadetblue}     {cmyk}{0.62, 0.57, 0.23, 0   }
\definecolor{cornflowerblue}{cmyk}{0.65, 0.13, 0   , 0   }
\definecolor{midnightblue}  {cmyk}{0.98, 0.13, 0   , 0.43}
\definecolor{navyblue}      {cmyk}{0.94, 0.54, 0   , 0   }
\definecolor{royalblue}     {cmyk}{1   , 0.50, 0   , 0   }
\definecolor{blue}          {cmyk}{1   , 1   , 0   , 0   }
\definecolor{cerulean}      {cmyk}{0.94, 0.11, 0   , 0   }
\definecolor{cyan}          {cmyk}{1   , 0   , 0   , 0   }
\definecolor{processblue}   {cmyk}{0.96, 0   , 0   , 0   }
\definecolor{skyblue}       {cmyk}{0.62, 0   , 0.12, 0   }
\definecolor{turquoise}     {cmyk}{0.85, 0   , 0.20, 0   }
\definecolor{tealblue}      {cmyk}{0.86, 0   , 0.34, 0.02}
\definecolor{aquamarine}    {cmyk}{0.82, 0   , 0.30, 0   }
\definecolor{bluegreen}     {cmyk}{0.85, 0   , 0.33, 0   }
\definecolor{emerald}       {cmyk}{1   , 0   , 0.50, 0   }
\definecolor{junglegreen}   {cmyk}{0.99, 0   , 0.52, 0   }
\definecolor{seagreen}      {cmyk}{0.69, 0   , 0.50, 0   }
\definecolor{green}         {cmyk}{1   , 0   , 1   , 0   }
\definecolor{forestgreen}   {cmyk}{0.91, 0   , 0.88, 0.12}
\definecolor{pinegreen}     {cmyk}{0.92, 0   , 0.59, 0.25}
\definecolor{limegreen}     {cmyk}{0.50, 0   , 1   , 0   }
\definecolor{yellowgreen}   {cmyk}{0.44, 0   , 0.74, 0   }
\definecolor{springgreen}   {cmyk}{0.26, 0   , 0.76, 0   }
\definecolor{olivegreen}    {cmyk}{0.64, 0   , 0.95, 0.40}
\definecolor{rawsienna}     {cmyk}{0   , 0.72, 1   , 0.45}
\definecolor{sepia}         {cmyk}{0   , 0.83, 1   , 0.70}
\definecolor{brown}         {cmyk}{0   , 0.81, 1   , 0.60}
\definecolor{tan}           {cmyk}{0.14, 0.42, 0.56, 0   }
\definecolor{gray}          {cmyk}{0   , 0   , 0   , 0.50}
\definecolor{black}         {cmyk}{0   , 0   , 0   , 1   }
\definecolor{white}         {cmyk}{0   , 0   , 0   , 0   } 
\pgfplotsset{compat=newest}
\newcommand{\externaltikz}[2]{\includegraphics{Externals/#1}}		
\newtheorem{theorem}{Theorem}[section]
\newtheorem{definition}[theorem]{Definition}
\newtheorem{remark}[theorem]{Remark}
\newtheorem{example}[theorem]{Example}
\newtheorem{assumption}[theorem]{Assumption}
\newtheorem{lemma}[theorem]{Lemma}
\newtheorem{corollary}[theorem]{Corollary}
\newcounter{tikzsubfigcounter}[figure]
\renewcommand{\thetikzsubfigcounter}{\thesection.\the\numexpr\value{figure}+1\relax\alph{tikzsubfigcounter}}
\newcounter{tikzsubfigcounterinvisible}[figure]
\renewcommand{\thetikzsubfigcounterinvisible}{\thesection.\the\numexpr\value{figure}+1\relax\alph{tikzsubfigcounterinvisible}}
\newcommand{\settikzlabel}[1]{ %
\refstepcounter{tikzsubfigcounterinvisible} \label{#1} 
}
\numberwithin{equation}{section}
\newcommand{\bdm}{\begin{displaymath}}
\newcommand{\edm}{\end{displaymath}}
\newcommand{\beq}{\begin{equation}}
\newcommand{\eeq}{\end{equation}}
\newcommand{\beqa}{\begin{eqnarray}}
\newcommand{\eeqa}{\end{eqnarray}}
\title{Kershaw closures for linear transport equations in slab geometry II: high-order realizability-preserving discontinuous-Galerkin schemes}
\author[fs]{Florian Schneider}
\address[fs]{Fachbereich Mathematik, TU Kaiserslautern, Erwin-Schr\"odinger-Str., 67663 Kaiserslautern, Germany, {\tt schneider@mathematik.uni-kl.de}}
\date{}
\newlength{\figureheight}
\newlength{\figurewidth}
\tikzstyle arrowstyle=[scale=1]
\tikzstyle directed=[postaction={decorate,decoration={markings,
		mark=at position .65 with {\arrow[arrowstyle]{stealth}}}}]
\tikzstyle reverse directed=[postaction={decorate,decoration={markings,
		mark=at position .65 with {\arrowreversed[arrowstyle]{stealth};}}}]
\newcommand{\secref}[1]{Section~\ref{#1}}
\newcommand{\thmref}[1]{Theorem~\ref{#1}}
\newcommand{\lemref}[1]{Lemma~\ref{#1}}
\renewcommand{\corref}[1]{Corollary~\ref{#1}}
\newcommand{\figref}[1]{Figure~\ref{#1}}
\newcommand{\tabref}[1]{Table~\ref{#1}}
\newcommand{\abs}[1]{\ensuremath{\left| #1 \right|}}
\newcommand{\N}{\mathbb{N}} 
\newcommand{\R}{\mathbb{R}}
\newcommand{\Rpos}{\R_{\geq 0}}
\newcommand{\scattering}{\ensuremath{\sigma_s}}
\newcommand{\absorption}{\ensuremath{\sigma_a}}
\newcommand{\crosssection}{\ensuremath{\sigma_t}}
\newcommand{\maxcrosssection}{\ensuremath{\crosssection^{\max}}}
\newcommand{\source}{\ensuremath{Q}}
\newcommand{\ceil}[1]{\ensuremath{\left\lceil #1 \right\rceil}}
\newcommand{\sphere}[1][2]{\ensuremath{\mathcal{S}^{#1}}}
\newcommand{\distribution}[1][ ]{\ensuremath{\psi_{#1}}}
\newcommand{\distributiontzero}{\ensuremath{\distribution[\timevar=0]}}
\newcommand{\distributionboundary}{\ensuremath{\distribution[b]}}
\newcommand{\distributionvacuum}{\ensuremath{\distribution[\text{vac}]}}
\newcommand{\ansatz}[1][ ]{\ensuremath{\hat{\psi}_{#1}}}
\newcommand{\momentorder}{\ensuremath{N}}
\newcommand{\momentnumber}{\ensuremath{n}}
\newcommand{\basis}[1][ ]{{\ensuremath{\bb_{#1}}}} %Basis
\newcommand{\basisind}{\ensuremath{i}} %Basis
\newcommand{\basiscomp}[1][\basisind]{\ensuremath{b_{#1}}} %Basis
\newcommand{\normalizedbasis}[1][ ]{{\ensuremath{\widehat{\bb}_{#1}}}} %Basis
\newcommand{\fmbasis}[1][]{\basis[#1]} %Basis
\newcommand{\moments}[1][ ]{\ensuremath{\bu_{#1}}} %moment vector
\newcommand{\momentcomp}[1]{\ensuremath{u_{#1}}} %moment vector
\newcommand{\momentstzero}{\ensuremath{\moments[\timevar=0]}}
\newcommand{\isotropicmoment}{\moments[\text{iso}]}
\newcommand{\normalizedmoments}[1][ ]{\ensuremath{\bsphi_{#1}}} %moment vector
\newcommand{\normalizedmomentcomp}[1]{\ensuremath{\phi_{#1}}} %moment vector
\newcommand{\normalizedisotropicmoment}{\normalizedmoments[\text{iso}]}
\newcommand{\convexscalar}{\ensuremath{\zeta}} %scalar indicating a convex combination
\newcommand{\flow}{\ensuremath{f_{\text{low}}}} %eddington factor
\newcommand{\fup}{\ensuremath{f_{\text{up}}}} %eddington factor
\newcommand{\SC}{\ensuremath{\Omega}} %moment vector
\newcommand{\SCheight}{\ensuremath{\mu}} %moment vector
\newcommand{\Domain}{\ensuremath{X}} %moment vector
\newcommand{\spatialVariable}{\ensuremath{\bx}} %moment vector
\newcommand{\timeint}{\ensuremath{T}} %time interval
\newcommand{\tf}{\ensuremath{t_f}} %final time
\newcommand{\timevar}{\ensuremath{t}} %final time
\newcommand{\ints}[1]{\ensuremath{\left<#1\right>}}
\newcommand{\collisionop}{\ensuremath{\cC}}
\newcommand{\collision}[1]{\ensuremath{\collisionop\left(#1\right)}}
\newcommand{\lincollisionop}{\ensuremath{\cI}}
\newcommand{\lincollision}[1]{\ensuremath{\lincollisionop\left(#1\right)}}
\newcommand{\collisionkernel}{\ensuremath{K}}
\newcommand{\collisionrealizablepart}[1][ ]{\ensuremath{\widetilde{\moments[#1]}}}
\newcommand{\dirac}{\ensuremath{\delta}}
\newcommand{\Lp}[1]{\ensuremath{L_{#1}}}
\newcommand{\RD}[2]{\ensuremath{\mathcal{R}_{#1}^{#2}}}
\newcommand{\PN}[1][\momentorder]{\ensuremath{\text{P}_{#1}}}
\newcommand{\MN}[1][\momentorder]{\ensuremath{\text{M}_{#1}}}
\newcommand{\KN}[1][\momentorder]{\ensuremath{\text{K}_{#1}}}
\newcommand{\Flux}{\ensuremath{\bF}}
\newcommand{\Source}{\ensuremath{\bs}}
\newcommand{\entropy}{\ensuremath{\eta}} %Entropy function
\def\quand{\quad \mbox{and} \quad}
\newcommand{\z}{\ensuremath{z}}
\newcommand{\dz}{\partial_{\z}}
\newcommand{\dt}{\partial_\timevar}
\newcommand{\hankelA}{A}
\newcommand{\hankelB}{B}
\newcommand{\hankelC}{C}
\newcommand{\hankelb}{\bsbeta} %Eventuell austauschen.........................................................
\newcommand{\hankelhalfind}{k}
\newcommand{\pseudoinv}[1]{\ensuremath{#1^\dag}}
\newcommand{\dtstepsize}{\ensuremath{\Delta \timevar}}
\newcommand{\zL}{\ensuremath{\z_{L}}}
\newcommand{\zR}{\ensuremath{\z_{R}}}
\newcommand{\dzstepsize}{\ensuremath{\Delta\z}}
\newcommand{\ncells}{\ensuremath{n_{\z}}}
\newcommand{\timeind}{\ensuremath{\kappa}}
\newcommand{\timepoint}[1]{\ensuremath{\timevar_{#1}}}
\newcommand{\cellind}{\ensuremath{j}}
\newcommand{\cell}[1]{\ensuremath{I_{#1}}}
\newcommand{\referencecell}{\ensuremath{\hat{I}}}
\newcommand{\referencevar}{\ensuremath{\hat{\z}}}
\newcommand{\polybasisind}{\ensuremath{i}}
\newcommand{\spatialorder}{\ensuremath{k}}
\newcommand{\spatialordersource}{\ensuremath{k_{s}}}
\newcommand{\polydegree}{\ensuremath{\spatialorder-1}}
\newcommand{\cellmean}[2][\cellind]{\ensuremath{\overline{#2}_{#1}}}
\newcommand{\limitervariable}{\ensuremath{\theta}}
\newcommand{\momentsprojected}{\ensuremath{\moments[h]}}
\newcommand{\momentcompprojected}[1]{\ensuremath{\momentcomp{h,#1}}}
\newcommand{\momentspv}[1]{\ensuremath{\moments[#1]}}
\newcommand{\momentcompcellmean}[1]{\ensuremath{\cellmean[#1]{\momentcomp{}}}}
\newcommand{\momentcomplimitedpv}[2][\limitervariable]{\momentcomp{#2}^{#1}}
\newcommand{\momentscollection}[1][ ]{\ensuremath{\widehat{\moments}_{h}^{#1}}}
\newcommand{\momentscellmean}[1]{\ensuremath{\cellmean[#1]{\moments}}}
\newcommand{\momentscellmeantime}[2]{\ensuremath{\cellmean[#1]{\moments}^{\left(#2\right)}}}
\newcommand{\momentslocal}[1]{\moments[#1]}
\newcommand{\momentslocallimited}[2][\limitervariable]{\moments[#2]^{#1}}
\newcommand{\momentslimitedpv}[2][\limitervariable]{\moments[#2]^{#1}}
\newcommand{\momentspolynomialcoefficients}[2]{\widehat{\moments}_{#1}^{#2}}
\newcommand{\momentspolynomialmatrix}[1]{\widehat{\moments}_{#1}}
\newcommand{\Testfunction}[1][ ]{\ensuremath{v_{#1}}}
\newcommand{\FiniteElementSpace}[1]{\ensuremath{V_h^{#1}}}
\newcommand{\SpaceOfPolynomials}[1]{\ensuremath{P^{#1}}}
\newcommand{\numericalFlux}{\ensuremath{\widehat{\bF}}}
\newcommand{\viscosityconstant}{\ensuremath{C}}
\newcommand{\DifferentialOperatorLocal}{\ensuremath{\tilde{L}_h}}
\newcommand{\LpError}[2][h]{\ensuremath{E_{#1}^{#2}}}
\newcommand{\minmod}[1]{\ensuremath{m\left(#1\right)}}
\newcommand{\slopelimiter}{\ensuremath{\Lambda^{\text{scalar}}}}
\newcommand{\TVBconstant}{\ensuremath{M}}
\def\analyticalsolution{\distribution[a]}
\newcommand{\analyticalmomentcomp}[1]{\ensuremath{\momentcomp{a,#1}}}
\newcommand{\MFSconstorder}{\ensuremath{\nu}}
\newcommand{\MFSp}{\ensuremath{p}}
\newcommand{\MFSgamma}{\ensuremath{\gamma}}
\newcommand{\spatialQuadrature}{\ensuremath{\Upsilon}}
\newcommand{\spatialQuadratureIndex}{\ensuremath{\upsilon}}
\newcommand{\spatialQuadratureNumber}{\ensuremath{{n_\spatialQuadrature}}}
\newcommand{\spatialQuadratureWeightsRef}[1]{\ensuremath{\hat{w}_{#1}}}
\newcommand{\spatialQuadratureNodes}[2]{\ensuremath{\z_{#1,#2}}}
\begin{document}

\begin{abstract}
This paper provides a generalization of the realizability-preserving discontinuous-Galerkin scheme given in \cite{Schneider2015a} to general full-moment models that can be closed analytically. It is applied to the class of Kershaw closures, which are able to provide a cheap closure of the moment problem. This results in an efficient algorithm for the underlying linear transport equation. The efficiency of high-order methods is demonstrated using numerical convergence tests and non-smooth benchmark problems.
\end{abstract}
\begin{keyword}
moment models \sep minimum entropy \sep Kershaw closures \sep kinetic transport equation \sep realizability-preserving \sep discontinuous-Galerkin scheme
\MSC[2010] 35L40 \sep 47B35 \sep 65M08 \sep 65M60 \sep 65M70
\end{keyword}
\maketitle

\noindent

% {\bf Key words.}

\section{Introduction}
Moment closures are a class of spectral methods used in the context of
kinetic transport equations.
An infinite set of moment equations is defined by taking velocity-
or phase-space averages with respect to some basis of the velocity space. A
reduced description of the kinetic density is then
achieved by truncating this hierarchy of equations at some finite order.
The remaining equations however inevitably require information from the
equations which were removed.
The specification of this information, the so-called moment closure problem, distinguishes different moment
methods.
In the context of linear radiative transport, the standard spectral method
is commonly referred to as the $\PN$ closure \cite{Lewis-Miller-1984},
where $\momentorder$ is the degree of the highest-order moments in the model.
The $\PN$ method is powerful and simple to implement, but does not
take into account the fact that the original function to be
approximated, the kinetic density, must be non-negative.
Thus $\PN$ solutions can contain negative values for the local
densities of particles, rendering the solution physically meaningless.

Entropy-based moment closures, referred to as $\MN$ models in the
context of radiative transport \cite{Min78,DubFeu99},
have all the properties one would desire in a moment method, namely
positivity of the underlying kinetic density,%
\footnote{
Positivity is actually not gained for every entropy-based moment closure
but is indeed a property of those models derived from important, physically
relevant entropies.
}
hyperbolicity of the closed system of equations,
and entropy dissipation \cite{Lev96}.
Practical implementation of these models has been traditionally considered
too expensive because they require the numerical solution of an
optimization problem at every point on the space-time grid, but recently
there has been renewed interest in the models due to their inherent
parallelizability \cite{Hauck2010}.
However, while their parallelizability goes a long way in making $\MN$
models computationally competitive, in order to make these methods truly
competitive with more basic discretizations, the gains in efficiency that
come from higher-order methods ({in space and time}) will likely be necessary.
Here the issue of realizability becomes a stumbling block.

The property of positivity implies that the system of moment
equations only evolves on the set of so-called realizable moments.
Realizable moments are simply those moments associated with positive
densities, and the set of these moments forms a convex cone which is a
strict subset of all moment vectors.
This property, while indeed desirable since it is consistent with the
original kinetic density, can cause problems for numerical methods.
Standard high-order numerical solutions {(in space and time)} to the Euler equations, which indeed are an
entropy-based moment closure, have been observed to have negative
local densities and pressures \cite{Zhang2010}. {Similar effects have been reported in the context of elastic flow \cite{Schar1996}.}
This is exactly loss of realizability.

A recently popular high-order method for hyperbolic systems is the
Runge-Kutta discontinuous Galerkin (RKDG) method
\cite{Cockburn1989,Cockburn1989a,Cockburn1991}.
An RKDG method for moment closures can handle the loss of realizability
through the use of a realizability (or ``positivity-preserving'') limiter
\cite{Zhang2010}, but so far these have been implemented for low-order
moment systems (that is $\momentorder = 1$ or $2$) \cite{Olbrant2012}
because here one can rely on the
simplicity of the structure of the realizable set for low-order moments.
{For moments of large order $\momentorder$}, the realizable set has complex nonlinear
boundaries: when the velocity domain is one-dimensional, the realizable
set is characterized by the positive-definiteness of Hankel matrices
\cite{Shohat1943,Curto1991}; in higher dimensions, the realizable
set is not well understood.
In \cite{Schneider2015a}, using that a quadrature-based
approximation of the realizable set is a convex polytope \cite{Alldredge2014}, the realizability
limiters of \cite{Zhang2010,Olbrant2012} has been generalized for moment systems of (in
principle) arbitrary order.

To avoid the expensive minimum-entropy ansatz a new hierarchy of full-moment models has been derived in \cite{Schneider2015}, the class of Kershaw closures, based on the findings in \cite{Ker76}. It provides a reasonably simple closure relation, closely related to minimum-entropy models while being cheap to evaluate. This paper aims at generalizing the scheme given in \cite{Schneider2015a} to this class of models for (in principle) arbitrary moment order $\momentorder$.

This paper is organized as follows. First, the transport equation and its moment approximations are given. Then, the available realizability theory is shortly reviewed, followed by a brief summary of the class of Kershaw closures. The discontinuous-Galerkin scheme is given with the necessary extensions to obtain a realizability-preserving scheme. Numerical convergence of this scheme up to seventh order against an analytical solution is shown and the Kershaw closures are submitted to a set of benchmark tests investigating the effect of high-order space-time approximations. Finally, conclusions and an outlook on future work is given.

\section{Modelling}
In slab geometry, the transport equation under consideration has the form 
\begin{align}
\label{eq:TransportEquation1D}
\dt\distribution+\SCheight\dz\distribution + \absorption\distribution = \scattering\collision{\distribution}+\source, \qquad \timevar\in\timeint,\z\in\Domain,\SCheight\in[-1,1].
\end{align}
The physical parameters are the absorption and scattering coefficient $\absorption,\scattering:\timeint\times\Domain\to\Rpos$, respectively, and the emitting source $\source:\timeint\times\Domain\times[-1,1]\to\Rpos$. Furthermore, $\SCheight\in[-1,1]$, and $\distribution = \distribution(\timevar,\z,\SCheight)$. 

The shorthand notation $\ints{\cdot} = \int\limits_{-1}^1\cdot~d\SCheight$ denotes integration over $[-1,1]$.

\begin{assumption}
\label{ass:CollisionOperator}
Following \cite{Levermore1996}, the collision operator $\collisionop$ is assumed to have the following properties.
\begin{enumerate}
\begin{subequations}
\label{eq:CollisionProperty}
\item Mass conservation
\begin{align}
\label{eq:CollisionPropertyMass}
\ints{\collision{\distribution}}=0.
\end{align}
\item Local entropy dissipation
\begin{align}
\label{eq:CollisionPropertyLocalDissipation}
\ints{\entropy'(\distribution)\collision{\distribution}}\leq 0,
\end{align}
where $\entropy$ denotes a strictly convex, twice differentiable entropy.
\item Constants in the kernel: 
\begin{align}
\label{eq:ConstantKernel}
\collision{c} = 0 \qquad \text{for every } c\in\R.
\end{align}
\end{subequations}
\end{enumerate}
\end{assumption}

A typical example for $\collisionop$ is the linear integral operator
\begin{equation}
 \lincollision{\distribution} =  \int\limits_{-1}^1 \collisionkernel(\SCheight, \SCheight^\prime)
  \distribution(\timevar, \z, \SCheight^\prime)~d\SCheight^\prime 
  - \distribution(\timevar, \z, \SCheight),
\label{eq:collisionOperatorLin1D}
\end{equation}
where $\collisionkernel$ is non-negative, symmetric in both arguments and normalized to $\int\limits_{-1}^1 \collisionkernel(\SCheight, \SCheight^\prime)~d\SCheight^\prime=1$.
In this paper the special case of the BGK-type isotropic-scattering operator with $\collisionkernel\equiv \frac12$ is used for the simulations.

\eqref{eq:TransportEquation1D} is supplemented by initial and boundary conditions:
\begin{subequations}
\begin{align}
\distribution(0,\z,\SCheight) &= \distributiontzero(\z,\SCheight) &\text{for } \z\in\Domain = (\zL,\zR), \SCheight\in[-1,1], \label{eq:TransportEquation1DIC}\\
\distribution(\timevar,\zL,\SCheight) &= \distributionboundary(\timevar,\zL,\SCheight) &\text{for } \timevar\in\timeint, \SCheight>0,  \label{eq:TransportEquation1DBCa}\\
\distribution(\timevar,\zR,\SCheight) &= \distributionboundary(\timevar,\zR,\SCheight) &\text{for } \timevar\in\timeint, \SCheight<0. \label{eq:TransportEquation1DBCb}
\end{align}
\end{subequations}

%Integrals over the ``half-sphere'' intervals $[-1,0]$ and $[0,1]$ are denoted by 
%\begin{align*}
%\intm{\cdot} = \int\limits_{-1}^{0}\cdot~d\SCheight,
%&&\intp{\cdot} = \int\limits_{0}^{1}\cdot~d\SCheight.
%\end{align*}
\section{Moment models and realizability}
In general, solving equation \eqref{eq:TransportEquation1D} is very expensive in two and three dimensions due to the high dimensionality of the state space. 

For this reason it is convenient to use some type of spectral or Galerkin method to transform the high-dimensional equation into a system of lower-dimensional equations. Typically, one chooses to reduce the dimensionality by representing the angular dependence of $\distribution$ in terms of some basis $\basis$.
\begin{definition}
The vector of functions $\basis:[-1,1]\to\R^{\momentorder+1}$ consisting of $\momentorder+1$ basis functions $\basiscomp[\basisind]$, $\basisind=0,\ldots\momentorder$ of maximal \emph{order} $\momentorder$ is called an \emph{angular basis}.

The so-called \emph{moments} of a given distribution function $\distribution$ with respect to $\basis$ are then defined by
\begin{align}
\label{eq:moments}
\moments =\ints{{\basis}\distribution} = \left(\momentcomp{0},\ldots,\momentcomp{\momentorder}\right)^T,
\end{align}
where the integration is performed componentwise.\\

Assuming for simplicity $\basiscomp[0]\equiv 1$, the quantity $\momentcomp{0} = \ints{\basiscomp[0]\distribution}=\ints{\distribution}$ is called \emph{local particle density}. 
Furthermore, \emph{normalized moments} $\normalizedmoments = \left(\normalizedmomentcomp{1},\ldots,\normalizedmomentcomp{\momentorder}\right)\in\R^{\momentorder}$ are defined as 
\begin{align}
\label{eq:NormalizedMoments}
\normalizedmoments = \cfrac{\ints{\normalizedbasis\distribution}}{\ints{\distribution}}~,
\end{align}
where $\normalizedbasis = \left(\basiscomp[1],\ldots,\basiscomp[\momentorder]\right)^T$ is the remainder of the basis $\basis$.
\end{definition}
To obtain a set of equations for $\moments$, \eqref{eq:TransportEquation1D} has to be multiplied through by $\basis$ and integrated over $[-1,1]$, giving
\begin{align*}
\ints{\basis\dt\distribution}+\ints{\basis\dz\SCheight\distribution} + \ints{\basis\absorption\distribution} = \scattering\ints{\basis\collision{\distribution}}+\ints{\basis\source}.
\end{align*}
Collecting known terms, and interchanging integrals and differentiation where possible, the moment system has the form
\begin{align}
\label{eq:MomentSystemUnclosed1D}
\dt\moments+\dz\ints{\SCheight \basis\ansatz[\moments]} + \absorption\moments = \scattering\ints{\basis\collision{\ansatz[\moments]}}+\ints{\basis\source}.
\end{align}
The solution of \eqref{eq:MomentSystemUnclosed1D} is equivalent to the one of \eqref{eq:TransportEquation1D} if $\basis$ is a basis of $\Lp{2}(\sphere,\R)$. 

Since it is impractical to work with an infinite-dimensional system, only a finite number of $\momentorder+1<\infty$ basis functions $\basis$ of order $\momentorder$ can be considered. Unfortunately, there always exists an index $\basisind\in\{0,\dots,\momentorder\}$ such that the components of $\basiscomp\cdot\SCheight$ are not in the linear span of $\basis[\momentorder]$. Therefore, the flux term cannot be expressed in terms of $\moments[\momentorder]$ without additional information. Furthermore, the same might be true for the projection of the scattering operator onto the moment-space given by $\ints{\basis\collision{\distribution}}$. This is the so-called \emph{closure problem}. One usually prescribes some \emph{ansatz} distribution $\ansatz[\moments](\timevar,\spatialVariable,\SC):=\ansatz(\moments(\timevar,\spatialVariable),\basis(\SC))$ to calculate the unknown quantities in \eqref{eq:MomentSystemUnclosed1D}. Note that the dependence on the angular basis in the short-hand notation $\ansatz[\moments]$ is neglected for notational simplicity.\\

In this paper, the \emph{full-moment monomial basis} $\basiscomp = \SCheight^\basisind$ is considered. However, it is in principle possible to extend the derived concepts to other bases like half \cite{DubKla02,DubFraKlaTho03} or mixed moments \cite{Frank07,Schneider2014}.

The rest of this section is a brief summary of the corresponding parts in \cite{Schneider2015}. All details and further discussions can be found therein.

\subsection{Realizability}
Since the underlying kinetic density to be approximated is
non-negative, a 
moment vector only makes sense physically if it can be associated with a 
non-negative distribution function. In this case the moment vector is called 
\emph{realizable}.

\begin{definition}
\label{def:RealizableSet}
The \emph{realizable set} $\RD{\basis}{}$\index{Realizability@\textbf{Realizability}!Realizable set $\RD{\basis}{}$} is 
$$
\RD{\basis}{} = \left\{\moments~:~\exists \distribution(\SCheight)\ge 0,\, \ints{\distribution} > 0,
 \text{ such that } \moments =\ints{\basis\distribution} \right\}.
$$
If $\moments\in\RD{\basis}{}$, then $\moments$ is called \emph{realizable}.
Any $\distribution$ such that $\moments =\ints{\basis \distribution}$ is called a \emph{representing 
density}. If $\distribution$ is additionally a linear combination of Dirac deltas \cite{Hassani2009,Mathematics2011,Kuo2006}, it is called \emph{atomic} \cite{Curto1991}.
\end{definition}

{\begin{definition}
\mbox{ }
Let $\hankelA,\hankelB\in\R^{\momentnumber\times \momentnumber}$ be Hermitian matrices. The partial ordering $"\geq"$ on such matrices is defined by $\hankelA\geq \hankelB$ if and only if $\hankelA-\hankelB$ is positive semi-definite. In particular $\hankelA\geq 0$ denotes that $\hankelA$ is positive semi-definite.
\end{definition}}

For the full-moment basis the question of finding practical characterizations of the realizable set $\RD{\basis}{}$ has been completely solved in \cite{Curto1991}. See \cite{Schneider2015} for more details.
The following characterizations of the above realizable set holds.
\begin{lemma}
\label{thm:FullMomentRealizability}
Define the \emph{Hankel matrices}
$$
\hankelA(\hankelhalfind):=\left(\momentcomp{i+j}\right)_{i,j=0}^\hankelhalfind, \quad \hankelB(\hankelhalfind):=\left(\momentcomp{i+j+1}\right)_{i,j=0}^\hankelhalfind, \quad \hankelC(\hankelhalfind):=\left(\momentcomp{i+j}\right)_{i,j=1}^\hankelhalfind.
$$
Then the realizable set satisfies
\begin{align*}
\RD{\fmbasis}{} = \begin{cases}
\left\{\moments\in\R^{\momentorder+1}~|~ \hankelA(\hankelhalfind)\geq \hankelB(\hankelhalfind),~\hankelA(\hankelhalfind)\geq - \hankelB(\hankelhalfind)\right\} & \text{ if } $\momentorder=2\hankelhalfind+1$,\\
\left\{\moments\in\R^{\momentorder+1}~|~ \hankelA(\hankelhalfind)\geq 0, \hankelA(\hankelhalfind-1)\geq \hankelC(\hankelhalfind)\right\} & \text{ if } $\momentorder=2\hankelhalfind$.\\
\end{cases}
\end{align*}
\end{lemma}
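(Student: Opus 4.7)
The plan is to split the argument into the two standard halves, treating necessity by an elementary calculation and sufficiency by invoking a classical theorem from the truncated Hausdorff moment problem.

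For necessity, suppose $\moments = \ints{\fmbasis \distribution}$ for some non-negative representing density $\distribution$. The key observation is that for any polynomial $p(\SCheight) = \sum_{\basisind=0}^{\hankelhalfind} c_\basisind \SCheight^\basisind$ and any weight $w(\SCheight)$ that is non-negative on $[-1,1]$, one has $\ints{w(\SCheight)\, p(\SCheight)^2 \distribution} \geq 0$, and this integral expands into a quadratic form in the coefficient vector $c$ whose matrix must therefore be positive semi-definite. I would then select weights adapted to the parity of $\momentorder$: in the odd case $\momentorder = 2\hankelhalfind + 1$, the choices $w_\pm(\SCheight) = 1 \pm \SCheight$ combined with $p$ of degree $\hankelhalfind$ yield $c^T(\hankelA(\hankelhalfind) \mp \hankelB(\hankelhalfind)) c \geq 0$, which produces the two inequalities $\hankelA(\hankelhalfind) \geq \pm \hankelB(\hankelhalfind)$; in the even case $\momentorder = 2\hankelhalfind$, the weight $w \equiv 1$ together with $p$ of degree $\hankelhalfind$ gives $\hankelA(\hankelhalfind) \geq 0$, while the weight $w(\SCheight) = 1 - \SCheight^2$ with $p$ of degree $\hankelhalfind - 1$ gives $\hankelA(\hankelhalfind - 1) \geq \hankelC(\hankelhalfind)$. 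The weights are tailored precisely so that every moment appearing in the resulting quadratic form lies inside the available range $\momentcomp{0},\dots,\momentcomp{\momentorder}$.

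For sufficiency, I would appeal to the classical truncated Hausdorff moment theorem of Curto and Fialkow \cite{Curto1991}, which asserts that positivity of the stated Hankel matrices is not merely necessary but also sufficient for the existence of an atomic non-negative representing density supported in $[-1,1]$. The standard route is a flat-extension argument: complete the given truncated moment sequence to a longer one whose associated Hankel matrix has the same rank, then recover the atoms and weights of the representing measure as eigenvalues and eigenvectors of a certain multiplication-by-$\SCheight$ operator on a finite-dimensional quotient space. The two factorizations $1-\SCheight^2 = (1-\SCheight)(1+\SCheight)$ and the complementary one-sided factors are what make this flat extension compatible with support in $[-1,1]$ rather than just in $\R$. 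This is where the main obstacle lies: the necessity direction is a few lines of linear algebra, whereas the sufficiency direction requires a genuine construction of a representing density from positivity of finitely many Hankel-type matrices. Since a careful and self-contained version of this construction is already available in \cite{Curto1991}, my plan would be to cite that reference rather than reproduce the argument in full.
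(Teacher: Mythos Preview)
The paper does not actually prove this lemma: it is stated as a known characterization and attributed to \cite{Curto1991} (with a pointer to \cite{Schneider2015} for details), so there is no ``paper's own proof'' to compare against beyond the citation itself. Your plan is correct and is precisely in the spirit of what the paper does: the necessity direction via non-negativity of $\ints{w(\SCheight)p(\SCheight)^2\distribution}$ with the weights $1\pm\SCheight$ (odd case) and $1,\,1-\SCheight^2$ (even case) is the standard elementary argument, and for sufficiency you defer to the Curto--Fialkow flat-extension theorem, which is exactly the reference the paper invokes. One cosmetic slip: with $w_\pm(\SCheight)=1\pm\SCheight$ the resulting quadratic form has matrix $\hankelA(\hankelhalfind)\pm\hankelB(\hankelhalfind)$, not $\hankelA(\hankelhalfind)\mp\hankelB(\hankelhalfind)$; the conclusion $\hankelA(\hankelhalfind)\geq\pm\hankelB(\hankelhalfind)$ is unaffected.
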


Due to the structure of the used Hankel matrices (the highest moment $\momentcomp{\momentorder}$ always appears exactly once in the entries of the matrices) it is always possible to rearrange the conditions involving this highest moment in \thmref{thm:FullMomentRealizability} in such a way that 
\begin{align*}
%\label{eq:UpperLowerBoundsFullMoments}
\fup(\momentcomp{0},\ldots,\momentcomp{\momentorder-1})\geq \momentcomp{\momentorder} \geq \flow(\momentcomp{0},\ldots,\momentcomp{\momentorder-1})
\end{align*}
for functions $\fup$ and $\flow$. Whenever $\moments$ is realizable and $\momentcomp{\momentorder} = \flow(\momentcomp{0},\ldots,\momentcomp{\momentorder-1})$, $\moments$ is said to be on the \emph{lower $\momentorder^{\text{th}}$-order realizability boundary}. Similarly, if $\momentcomp{\momentorder} = \fup(\momentcomp{0},\ldots,\momentcomp{\momentorder-1})$, $\moments$ is said to be on the \emph{upper $\momentorder^{\text{th}}$-order realizability boundary}.

The functions $\fup$ and $\flow$ can be specified using the pseudoinverses of combinations of Hankel matrices.
To simplify notation later, the following corollary is written in terms of $\momentcomp{\momentorder+1}$ instead of $\momentcomp{\momentorder}$.
\begin{corollary}
\label{cor:FupFlow}
The functions $\fup$ and $\flow$ satisfying 
\begin{align}
\label{eq:UpperLowerBoundsFullMoments}
\fup(\momentcomp{0},\ldots,\momentcomp{\momentorder})\geq \momentcomp{\momentorder+1} \geq \flow(\momentcomp{0},\ldots,\momentcomp{\momentorder})
\end{align}
are given by
\begin{align*}
\fup(\momentcomp{0},\ldots,\momentcomp{\momentorder}) &= 
\begin{cases}
\momentcomp{\momentorder-1}-\hankelb_-^T\pseudoinv{\left(\hankelA(\hankelhalfind-1)-\hankelC(\hankelhalfind-1)\right)}\hankelb_- & \text{ if } \momentorder = 2\hankelhalfind+1\\
\momentcomp{\momentorder}-\hankelb_-^T\pseudoinv{\left(\hankelA(\hankelhalfind-1)-\hankelB(\hankelhalfind-1)\right)}\hankelb_- & \text{ if } \momentorder = 2\hankelhalfind
\end{cases}\\
\flow(\momentcomp{0},\ldots,\momentcomp{\momentorder}) &= 
\begin{cases}
\hankelb_+^T\pseudoinv{\hankelA}(\hankelhalfind)\hankelb_+& \text{ if } \momentorder = 2\hankelhalfind+1\\
-\momentcomp{\momentorder}+\hankelb_+^T\pseudoinv{\left(\hankelA(\hankelhalfind-1)+\hankelB(\hankelhalfind-1)\right)}\hankelb_+~ & \text{ if } \momentorder = 2\hankelhalfind
\end{cases}
\end{align*}
where in the odd case
\begin{align*}
\hankelb_- = \left(\momentcomp{\hankelhalfind}-\momentcomp{\hankelhalfind+2},\ldots,\momentcomp{\momentorder-2}-\momentcomp{\momentorder}\right)^T,\qquad \hankelb_+ = \left(\momentcomp{\hankelhalfind+1},\ldots,\momentcomp{\momentorder}\right)^T
\end{align*}
and in the even case
\begin{align*}
\hankelb_\mp = \left(\momentcomp{\hankelhalfind}\mp\momentcomp{\hankelhalfind+1},\ldots,\momentcomp{\momentorder-1}\mp\momentcomp{\momentorder}\right)^T.
\end{align*}
\end{corollary}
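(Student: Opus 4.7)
The plan is to apply \lemref{thm:FullMomentRealizability} one order higher, to the extended vector $(\momentcomp{0}, \ldots, \momentcomp{\momentorder+1})$, and to isolate $\momentcomp{\momentorder+1}$ from each resulting positive-semidefiniteness condition via the generalized Schur complement lemma: for a Hermitian block matrix $\bigl(\begin{smallmatrix} X & y \\ y^T & z \end{smallmatrix}\bigr)$ with $X \geq 0$, positive semidefiniteness is equivalent to $y \in \range{X}$ together with $z \geq y^T \pseudoinv{X} y$. Realizability of $\moments$ guarantees that the top-left block in each decomposition below is itself $\geq 0$, so this lemma is always applicable.

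When $\momentorder = 2\hankelhalfind+1$ is odd, the extended vector has even order, so realizability is characterized by $\hankelA(\hankelhalfind+1) \geq 0$ together with $\hankelA(\hankelhalfind) \geq \hankelC(\hankelhalfind+1)$. The new moment $\momentcomp{\momentorder+1} = \momentcomp{2\hankelhalfind+2}$ occupies only the bottom-right entry of $\hankelA(\hankelhalfind+1)$; the adjacent column is exactly $\hankelb_+ = (\momentcomp{\hankelhalfind+1}, \ldots, \momentcomp{\momentorder})^T$ and the principal block is $\hankelA(\hankelhalfind)$, so Schur yields the lower bound $\momentcomp{\momentorder+1} \geq \hankelb_+^T \pseudoinv{\hankelA(\hankelhalfind)} \hankelb_+$. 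The second condition decomposes as
\begin{equation*}
\hankelA(\hankelhalfind) - \hankelC(\hankelhalfind+1) = \begin{pmatrix} \hankelA(\hankelhalfind-1) - \hankelC(\hankelhalfind) & \hankelb_- \\ \hankelb_-^T & \momentcomp{\momentorder-1} - \momentcomp{\momentorder+1} \end{pmatrix},
\end{equation*}
via the elementary shift identity $(\hankelA(\hankelhalfind) - \hankelC(\hankelhalfind+1))_{ij} = \momentcomp{i+j} - \momentcomp{i+j+2}$ on $\{0,\ldots,\hankelhalfind\}^2$; Schur then delivers $\momentcomp{\momentorder+1} \leq \momentcomp{\momentorder-1} - \hankelb_-^T \pseudoinv{(\hankelA(\hankelhalfind-1) - \hankelC(\hankelhalfind))} \hankelb_-$, which is precisely $\fup$.

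In the even case $\momentorder = 2\hankelhalfind$, extending yields an odd-order vector, so realizability now requires $\hankelA(\hankelhalfind) \pm \hankelB(\hankelhalfind) \geq 0$, and $\momentcomp{\momentorder+1}$ appears only in the bottom-right of $\hankelB(\hankelhalfind)$. An analogous block decomposition of $\hankelA(\hankelhalfind) \mp \hankelB(\hankelhalfind)$ identifies its top-left block as $\hankelA(\hankelhalfind-1) \mp \hankelB(\hankelhalfind-1)$, its border vector as $\hankelb_\mp$, and its scalar corner as $\momentcomp{\momentorder} \mp \momentcomp{\momentorder+1}$, so Schur yields both bounds after rearrangement. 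The one delicate point in the entire argument is checking the range condition $\hankelb_\pm \in \range{\cdot}$ required by the generalized Schur complement: this is automatic on the interior of $\RD{\fmbasis}{}$, where the relevant Hankel-type matrices are strictly positive definite, and its validity on the boundary is exactly the reason the Moore--Penrose pseudoinverse is the correct object here (rather than an ordinary inverse). The remainder is routine index bookkeeping, which follows by the same shift identity as above applied to the three kinds of combinations $\hankelA - \hankelC$, $\hankelA - \hankelB$, and $\hankelA + \hankelB$.
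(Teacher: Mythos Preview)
The paper does not actually prove this corollary; it is quoted from \cite{Schneider2015} as part of a summary and no argument is supplied here. Your Schur-complement approach is the natural and correct one: extend the moment vector by one order, invoke \lemref{thm:FullMomentRealizability} at order $\momentorder+1$, and peel off the corner entry containing $\momentcomp{\momentorder+1}$ via the generalized Schur complement. The range condition you flag is indeed automatic, since any realizable $(\momentcomp{0},\ldots,\momentcomp{\momentorder})$ admits at least one realizable extension (take the $(\momentorder+1)$-st moment of any representing measure), which forces the bordered Hankel matrix to be positive semidefinite and hence the border vector to lie in the range of the principal block.

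One point worth making explicit: in the odd case $\momentorder = 2\hankelhalfind+1$ your block decomposition of $\hankelA(\hankelhalfind) - \hankelC(\hankelhalfind+1)$ correctly identifies the principal block as $\hankelA(\hankelhalfind-1) - \hankelC(\hankelhalfind)$, not $\hankelA(\hankelhalfind-1) - \hankelC(\hankelhalfind-1)$ as printed in the corollary. A dimension count confirms that $\hankelA(\hankelhalfind-1)\in\R^{\hankelhalfind\times\hankelhalfind}$ while $\hankelC(\hankelhalfind-1)\in\R^{(\hankelhalfind-1)\times(\hankelhalfind-1)}$, so the printed difference is not even defined; your derivation therefore also corrects a typographical slip in the statement.
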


\begin{remark}
By convention, $\hankelb_-^T\pseudoinv{\left(\hankelA(\hankelhalfind-1)-\hankelC(\hankelhalfind-1)\right)}\hankelb_- = 0$ if $\momentorder = 1$.
\end{remark}
\subsection{Kershaw closures}
With the previous realizability theory it is now possible to develop the closure strategy which is called \emph{Kershaw} closure. This class of moment models is defined by convexly combining upper and lower moments of order $\momentorder+1$ in such a way that the isotropic point is correctly reproduced.

\begin{corollary}
\mbox{ }\\
The Kershaw closure $\KN$ of order $\momentorder$ is given by
\begin{align}
\label{eq:KnAnsatz}
\normalizedmomentcomp{\momentorder+1}(\normalizedmoments) = \convexscalar \flow(\normalizedmoments)+(1-\convexscalar) \fup(\normalizedmoments),
\end{align}
where the interpolation constant
\begin{align}
\label{eq:KnAnsatzScalar}
\convexscalar = \cfrac{\frac12\ints{\SCheight^{\momentorder+1}}-\fup(\normalizedisotropicmoment)}{\flow(\normalizedisotropicmoment)-\fup(\normalizedisotropicmoment)} = 
\begin{cases}
\frac{\hankelhalfind+2}{2\hankelhalfind+3} & \text{ if } \momentorder = 2\hankelhalfind+1\\
\frac12 & \text{ if } \momentorder = 2\hankelhalfind
\end{cases}
\end{align}
is defined via the functions $\fup$ and $\flow$ as given in \corref{cor:FupFlow} and $\normalizedisotropicmoment = \frac{\ints{\basis}}{2}$.
\end{corollary}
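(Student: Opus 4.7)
The plan is to prove the two equalities in turn: the first is a direct rearrangement of the defining property, while the second requires evaluating $\fup$ and $\flow$ at the isotropic point.

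For the first equality, I would start from the Kershaw ansatz $\normalizedmomentcomp{\momentorder+1}(\normalizedmoments) = \convexscalar\flow(\normalizedmoments) + (1-\convexscalar)\fup(\normalizedmoments)$ and evaluate it at $\normalizedmoments = \normalizedisotropicmoment$. Since the isotropic normalized moment corresponds to the uniform density $\frac{1}{2}$ on $[-1,1]$, the true value of the $(\momentorder+1)$-th normalized moment is $\frac{1}{2}\ints{\SCheight^{\momentorder+1}}$. Imposing that the ansatz reproduce this value and solving the resulting linear equation for $\convexscalar$ yields the first equality.

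For the explicit case-by-case values, I would split on the parity of $\momentorder$. In the even case $\momentorder = 2\hankelhalfind$, the exponent $\momentorder+1$ is odd, so $\frac{1}{2}\ints{\SCheight^{\momentorder+1}} = 0$. By the $\SCheight \mapsto -\SCheight$ symmetry of the isotropic density, all odd isotropic moments vanish. A short calculation using Corollary \ref{cor:FupFlow} shows that the vectors $\hankelb_+$ and $\hankelb_-$ evaluated at $\normalizedisotropicmoment$ differ only by a diagonal sign flip $D$ with $D^2=I$, and that the same conjugation sends $\hankelA(\hankelhalfind-1) - \hankelB(\hankelhalfind-1)$ to $\hankelA(\hankelhalfind-1) + \hankelB(\hankelhalfind-1)$. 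Since $\momentcomp{\momentorder} = \frac{1}{\momentorder+1}$ appears with opposite signs in the two formulas, this gives $\fup(\normalizedisotropicmoment) = -\flow(\normalizedisotropicmoment)$, so the first-equality formula collapses to $\convexscalar = \frac12$.

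In the odd case $\momentorder = 2\hankelhalfind+1$, the integrand is even and $\frac{1}{2}\ints{\SCheight^{\momentorder+1}} = \frac{1}{2\hankelhalfind+3}$. Here one substitutes the isotropic moments $\momentcomp{k} = \frac{1}{k+1}$ ($k$ even), $\momentcomp{k} = 0$ ($k$ odd) into
\begin{align*}
\flow(\normalizedisotropicmoment) &= \hankelb_+^T\pseudoinv{\hankelA(\hankelhalfind)}\hankelb_+, \\
\fup(\normalizedisotropicmoment) &= \momentcomp{\momentorder-1} - \hankelb_-^T\pseudoinv{(\hankelA(\hankelhalfind-1) - \hankelC(\hankelhalfind-1))}\hankelb_-,
\end{align*}
and verifies the claimed value of $\convexscalar$. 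The main obstacle will be evaluating these quadratic forms, since the Hankel matrices at the isotropic point have a Hilbert-matrix-like structure whose pseudoinverses admit closed forms but are algebraically heavy. A cleaner alternative is to use an atomic representation: at the isotropic moment the extremal measures producing the upper and lower bounds on $\momentcomp{\momentorder+1}$ are known weighted Dirac combinations supported at Gauss--Legendre-type nodes (as exploited in \cite{Schneider2015,Curto1991}), and from these representations $\fup(\normalizedisotropicmoment)$ and $\flow(\normalizedisotropicmoment)$ can be read off directly, bypassing the pseudoinverse algebra. Substituting the resulting values into the first equality and simplifying yields $\convexscalar = \frac{\hankelhalfind+2}{2\hankelhalfind+3}$, which completes the proof.
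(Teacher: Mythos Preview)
The paper does not supply its own proof of this corollary; the entire subsection is declared to be ``a brief summary of the corresponding parts in \cite{Schneider2015}'', and the corollary is stated without argument. So there is nothing to compare against directly.

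Your plan is sound. The derivation of the first equality is immediate from the defining requirement that the isotropic point be reproduced, exactly as you say. Your even-case argument is correct and in fact complete: with $D=\operatorname{diag}((-1)^0,\ldots,(-1)^{\hankelhalfind-1})$ one has $DAD=A$ and $DBD=-B$ at the isotropic point (since $A_{ij}=u_{i+j}$ vanishes for $i+j$ odd and $B_{ij}=u_{i+j+1}$ vanishes for $i+j$ even), hence $D(A\pm B)D=A\mp B$ and, up to an overall sign $(-1)^{\hankelhalfind}$, $D\hankelb_-=\hankelb_+$. This yields $\hankelb_+^T(A+B)^\dagger\hankelb_+=\hankelb_-^T(A-B)^\dagger\hankelb_-$ and therefore $\fup(\normalizedisotropicmoment)=-\flow(\normalizedisotropicmoment)$, giving $\convexscalar=\tfrac12$.

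For the odd case your proposed shortcut via atomic extremal measures is the right idea and is precisely the route taken in \cite{Schneider2015}: the lower and upper boundary values at $\normalizedisotropicmoment$ are realized by the Gauss--Legendre and Gauss--Lobatto quadrature measures, respectively, from which $\flow(\normalizedisotropicmoment)$ and $\fup(\normalizedisotropicmoment)$ can be read off and the value $\convexscalar=\frac{\hankelhalfind+2}{2\hankelhalfind+3}$ follows. Attempting instead to invert the Hilbert-type Hankel matrices directly would work in principle but is unnecessarily painful; you are right to avoid it.
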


For convenience, \eqref{eq:MomentSystemUnclosed1D} using the Kershaw closure can be written in the form of a usual first-order system of balance laws
\begin{align}
\label{eq:GeneralHyperbolicSystem1D}
\dt\moments+\dz\Flux_3\left(\moments\right) = \Source\left(\moments\right),
\end{align}
where 
\begin{subequations}
\label{eq:FluxDefinitions}
\begin{align}
\Flux\left(\moments\right) &= \left(\momentcomp{1},\ldots,\momentcomp{\momentorder+1}\right)\in\R^{\momentorder+1},\\
\Source\left(\moments\right) &= \scattering\left(\frac12 \normalizedisotropicmoment\momentcomp{0}-\moments\right)+\ints{\basis\source}-\absorption\moments.
\end{align}
\end{subequations}
\section{Realizability-preserving discontinuous-Galerkin scheme}
Recent numerical experiments have shown that high-order schemes {(in space and time)} outperform highly-resolved first-order methods, comparing degrees of freedom and running time versus approximation quality. This has been investigated in the case of minimum-entropy moment models in \cite{Schneider2015a,Schneider2015b} for two different types of schemes. The most challenging part is to preserve realizability during the simulation since otherwise the closure cannot be evaluated. Unfortunately, higher-order schemes typically cannot guarantee this property on their own, as
has been observed in the context of the compressible Euler equations (which are
indeed in the hierarchy of minimum-entropy models) in \cite{Zhang2010} and for the $\MN[1]$ model in \cite{Olbrant2012}.

Due to the lack of smoothness in the underlying distribution of the Kershaw models (since it is atomic) the application of the high-order kinetic scheme presented in \cite{Schneider2015b} is not obvious. This has been observed before in \cite{Vikas2011} for quadrature-based moment methods. Therefore this paper focuses on the \textbf{discontinuous-Galerkin scheme} presented in \cite{Schneider2015a}. While there only quadrature-based minimum-entropy models have been investigated, the following sections will show how to generalize the scheme and its realizability limiter to the general case of full-moment models.\\

In the following, the spatial domain $\Domain = (\zL, \zR)$ is divided into (for notational simplicity) $\ncells$ (equidistant) cells $\cell{\cellind} = (\z_{\cellind-\frac12}, \z_{\cellind+\frac12})$, where the cell interfaces are given by $\z_{\cellind\pm\frac12} = \z_\cellind \pm 
\frac{\dzstepsize}{2}$ for cell centres $\z_\cellind = \zL + (\cellind - \frac12)\dzstepsize$, and
$\dzstepsize = \frac{\zR - \zL}{\ncells}$. 

Furthermore, $\SpaceOfPolynomials{\spatialorder}(\cell{\cellind})$ is the set of polynomials of degree at most $\spatialorder$ on the interval $\cell{\cellind}$, and 
\begin{equation}
\FiniteElementSpace{\spatialorder} = \{\Testfunction \in \Lp{1}(\Domain): \Testfunction|_{\cell{\cellind}} \in \SpaceOfPolynomials{\polydegree}(\cell{\cellind}) \text{ for } 
 \cellind \in \{ 1, \ldots , \ncells \} \}
\label{eq:dg-space}
\end{equation}
is the finite-element space of piecewise polynomials of degree $\polydegree$.\\

The discontinuous-Galerkin method for the general hyperbolic system \eqref{eq:GeneralHyperbolicSystem1D}, as outlined in \cite{Cockburn1989,Cockburn1989a,Cockburn1991}, can be briefly described as follows.

For each $\timevar\in\timeint$, seek an approximate solution $\momentsprojected(\timevar, \z)$ whose components live in the finite-element space $\FiniteElementSpace{\spatialorder}$ as defined in \eqref{eq:dg-space}.

Then follow the Galerkin approach: replace $\moments$ in \eqref{eq:GeneralHyperbolicSystem1D} by a 
solution of the form $\momentsprojected \in \FiniteElementSpace{\spatialorder}$, multiply the resulting equation by basis functions $\Testfunction[h]$ of $\FiniteElementSpace{\spatialorder}$ and integrate over cell $\cell{\cellind}$ to obtain
\begin{subequations}
\label{eq:dweakform1}
\begin{align}
 \dt \int_{\cell{\cellind}} \momentsprojected(\timevar, \z)\Testfunction[h](\z)~d\z
 &+ \Flux_3(\momentsprojected(\timevar, \z_{\cellind+\frac12}^-)) \Testfunction[h](\z_{\cellind+\frac12}^-)
  - \Flux_3(\momentsprojected(\timevar, \z_{\cellind-\frac12}^+)) \Testfunction[h](\z_{\cellind-\frac12}^+) \nonumber \\
 &-\int_{\cell{\cellind}} \Flux_3(\momentsprojected(\timevar, \z)) \dz \Testfunction[h](\z)~d\z 
 = \int_{\cell{\cellind}} \Source(\momentsprojected(\timevar, \z))\Testfunction[h](\z)~d\z  \label{eq:dweakform1a},\\
 \int_{\cell{\cellind}} \momentsprojected(0, \z)\Testfunction[h](\z)~d\z &= \int_{\cell{\cellind}} \momentstzero(\z) 
\Testfunction[h](\z)~d\z,
 \label{eq:dweakform1b}
\end{align}
\end{subequations}
where $\z_{\cellind \pm \frac12}^-$ and $\z_{\cellind \pm \frac12}^+$ again denote the limits 
from left and right, respectively, and $\momentstzero = \ints{\basis\distributiontzero}$ is the projection of the initial 
distribution to the moment space.
In order to approximately solve the Riemann problem at the cell-interfaces,
the fluxes $\Flux_3(\momentsprojected(\timevar, \z_{\cellind + \frac12}^\pm))$ at the points of discontinuity are both replaced by a 
numerical flux $\numericalFlux(\momentsprojected(\timevar, \z_{\cellind+\frac12}^-), \momentsprojected(\timevar, \z_{\cellind+\frac12}^+))$, thus 
coupling the elements with their neighbours \cite{Toro2009}. Several well-known examples for such a numerical flux $\numericalFlux$ exist in literature. The simplest example is the global Lax-Friedrichs flux
\begin{align}
\label{eq:globalLF}
 \numericalFlux(\moments[1], \moments[2]) = \dfrac{1}{2} \left( \Flux_3(\moments[1]) + \Flux_3(\moments[2]) - C ( \moments[2] - 
 \moments[1]) \right).
\end{align}
The numerical viscosity constant $\viscosityconstant$ is taken as the global estimate of the 
absolute value of the largest eigenvalue of the Jacobian
$\Flux_3'$. Following \cite{Schneider2015}, the viscosity constant can be set to $\viscosityconstant = 1$, because for the moment systems used here it can be shown that the largest eigenvalue is bounded in absolute value by one\footnote{The results in \cite{Schneider2015} prove this for $\momentorder\in\{1,2\}$ but there is no general proof of this fact for arbitrary Kershaw closures yet.}.

The local Lax-Friedrichs flux could be used instead.
This requires computing the eigenvalues of the Jacobian in every
space-time cell to adjust the value of the numerical viscosity constant $\viscosityconstant$
but possibly decreases the overall diffusivity of the scheme.
However, since {high-order space-time approximations} are considered, the decrease in
diffusivity achieved by switching to the local Lax-Friedrichs flux should be
negligible.\\

The usual approach is to expand the approximate solution $\momentsprojected$ on each interval as
\begin{align}
\label{eqn:solution_form}
\left.\momentsprojected\right|_{\cell{\cellind}}(\timevar, \z) := \momentslocal{\cellind}(\timevar, \z) := \sum_{\polybasisind=0}^{\polydegree}\momentspolynomialcoefficients{\cellind}{\polybasisind}(\timevar) 
\Testfunction[\polybasisind]\left( \frac{\z - \z_\cellind}{\dzstepsize} \right),
\end{align}
where $\Testfunction[0], \Testfunction[1], \ldots ,\Testfunction[\polydegree]$ denote a 
basis for $\SpaceOfPolynomials{\spatialorder}(\referencecell)$ with respect to the standard $\Lp{2}$-scalar product on the reference cell $\referencecell = \left(-\frac12,\frac12\right)$. It is convenient to choose an orthogonal basis like the Legendre 
polynomials scaled to the interval $\referencecell$, denoted by
\begin{align}
\label{eq:LegendrePolynomialBasis}
\Testfunction[0](\referencevar) = 1, \quad \Testfunction[1](\referencevar) = 2\referencevar, \quad \Testfunction[2](\referencevar) = 
 \frac12 (12\referencevar^2-1), \: \ldots
\end{align}
With an orthogonal basis the cell means $\momentscellmean{\cellind}$ are easily available from the 
expansion coefficients $\momentspolynomialcoefficients{\cellind}{\polybasisind}$, since
\begin{align}
\momentscellmean{\cellind}(\timevar) := \frac{1}{\dzstepsize} \int_{\cell{\cellind}} \momentslocal{\cellind}(\timevar, \z)~d\z
 = \frac{1}{\dzstepsize} \sum_{\polybasisind=0}^{\polydegree} \momentspolynomialcoefficients{\cellind}{\polybasisind}(\timevar)
 \int_{\cell{\cellind}} \Testfunction[\polybasisind]\left( \frac{\z-\z_\cellind}{\dzstepsize} \right)~d\z
 = \momentspolynomialcoefficients{\cellind}{0}(\timevar).
\label{eq:Cellmean}
\end{align}
Collecting the coefficients $\momentspolynomialcoefficients{\cellind}{\polybasisind}(\timevar)$ into the $\spatialorder \times 
(\momentorder +1)$ matrix
\begin{align}
\label{eq:momentspolynomialmatrix}
\momentspolynomialmatrix{\cellind}(\timevar) = \left( \momentspolynomialcoefficients{\cellind}{0}(\timevar),\ldots , \momentspolynomialcoefficients{\cellind}{\polydegree}(\timevar)\right)^T,
\end{align}

equation \eqref{eq:dweakform1} can be written in compact form as the coupled system of ordinary differential equations
\begin{align}
\label{eq:odeDG}
\dt \momentspolynomialmatrix{\cellind} &= \DifferentialOperatorLocal(\momentspolynomialmatrix{\cellind - 1}, \momentspolynomialmatrix{\cellind}, \momentspolynomialmatrix{\cellind + 1}), \quad 
\text{for } 
\cellind \in \{1, \ldots , \ncells\} \text{ and } \timevar \in \timeint,
\end{align}
with initial condition \eqref{eq:dweakform1b} and an appropriate choice of the local differential operator $\DifferentialOperatorLocal$ \cite{Schneider2015a}.

The incorporation of boundary conditions for moment systems is non-trivial. Here, an often-used approach is taken that incorporates boundary conditions via `ghost cells'.
First assume that it is possible to smoothly extend $\distributionboundary(\timevar,\z,\SCheight)$ in $\SCheight$ to $[-1,1]$ for $\z\in\{\zL,\zR\}$ (note that while moments are defined using integrals over all $\SCheight$, the
boundary conditions in \eqref{eq:TransportEquation1DBCa}--\eqref{eq:TransportEquation1DBCb} are only defined for
$\SCheight$ corresponding to incoming data). 

Then the moment approximations in the ghost cells at $\z_0$ and $\z_{\ncells+1}$ simply take the form 
\begin{subequations}
\label{eq:BCMomentSystemSimple1D}
\begin{align}
\momentslocal{0}(\timevar, \z_{\frac12}) &:= \ints{\basis \distributionboundary(\timevar,\zL,\SCheight)},\\
\momentslocal{\ncells + 1}(\timevar, \z_{\ncells + \frac12}) &:= \ints{\basis \distributionboundary(\timevar,\zR,\SCheight)}.
\end{align}
\end{subequations}
Note, however, that the validity of this approach, due to its inconsistency with 
the original boundary conditions  \eqref{eq:TransportEquation1DBCa}--\eqref{eq:TransportEquation1DBCb}, is not entirely non-controversial, but the question of appropriate boundary conditions for
moment models is an open problem \cite{pomraning1964variational,Larsen1991,Rulko1991,Struchtrup2000,levermore2009boundary} which is not explored here. 

For Dirichlet-boundary conditions, the simplest approach is taken. The ghost-cell moments are chosen to be the constant functions 
\begin{align*}
\momentslocal{0}(\timevar, \z) &\equiv \momentslocal{0}(\timevar, \z_{\frac12}),\\
\momentslocal{\ncells + 1}(\timevar, \z) &\equiv \momentslocal{\ncells + 1}(\timevar, \z_{\ncells + \frac12}),
\end{align*}
with $\momentslocal{0}(\timevar, \z_{\frac12})$ and $\momentslocal{\ncells + 1}(\timevar, \z_{\ncells + \frac12})$ defined as in \eqref{eq:BCMomentSystemSimple1D}.\\

For periodic boundary conditions, the obvious choice is
\begin{align*}
\momentslocal{0}(\timevar, \z) &= \momentslocal{\ncells}(\timevar, \z+\zR-\zL),\quad \z\in\cell{0},\\
\momentslocal{\ncells + 1}(\timevar, \z) &=\momentslocal{1}(\timevar, \z-\zR+\zL),\quad \z\in\cell{\ncells+1}.
\end{align*}

{All that remains to obtain a high-order scheme in space and time is a suitable time integrator for \eqref{eq:odeDG}}. Such a class of integrators is given by the \emph{strong-stability-preserving} (\emph{SSP}) methods, as used for example in \cite{Zhang2010,AllHau12}. The
stages and steps of these type of methods are convex combinations of forward-Euler steps.
Since the realizable set is convex, the analysis of a forward-Euler step
then suffices to prove realizability preservation of the {full method}.\\

When possible, \emph{SSP-Runge-Kutta} (\emph{SSP-RK}) methods are used, but unfortunately they only exist up to order four \cite{Ruuth2004,Gottlieb2005}.
{For orders $\spatialorder\geq 5$} the so-called \emph{two-step Runge-Kutta} (\emph{TSRK}) \emph{SSP}
methods \cite{Ketcheson2011} as well as their generalizations, the
\emph{multi-step Runge-Kutta} (\emph{MSRK}) \emph{SSP} methods \cite{Bresten2013} can be applied.
They combine Runge-Kutta schemes with positive weights and
high-order multistep methods to achieve a total order higher than
four while maintaining the important SSP property.

See \cite{Schneider2015b} for more information about the SSP-schemes used in the actual implementation. Note that they differ from those used in \cite{Schneider2015a} where only discretizations up to third order were used, in contrast to the methods of order one to seven given in \cite{Schneider2015b}.

The rest of the methodology follows closely \cite{Schneider2015a}. Here, the standard \emph{TVBM corrected minmod limiter} proposed in
\cite{Cockburn1989a} is used.

Assuming that the major part of the spurious oscillations is 
generated in the linear part of the underlying polynomial, whose slope in
the reference cell is 
simply $\momentspolynomialcoefficients{\cellind}{1}$, a limiter can be defined as
\begin{align}
\label{eq:slopelimiterscalar}
 \slopelimiter(\momentspolynomialmatrix{\cellind-1},\momentspolynomialmatrix{\cellind},\momentspolynomialmatrix{\cellind+1}) = \left\{ 
  \begin{array}{cc}
   \left( \begin{array}{c}
           \left( \momentspolynomialcoefficients{\cellind}{0} \right)^T \\
            \minmod{\momentspolynomialcoefficients{\cellind}{1}, \momentspolynomialcoefficients{\cellind+1}{0} - \momentspolynomialcoefficients{\cellind}{0}, 
             \momentspolynomialcoefficients{\cellind}{0} - \momentspolynomialcoefficients{\cellind-1}{0}}^T \\
           (0, 0, \ldots , 0)\\
           \vdots\\
           (0, 0, \ldots , 0)
          \end{array} \right)
    &\text{ if }
     \abs{\momentspolynomialcoefficients{\cellind}{1}} \geq \TVBconstant(\dzstepsize)^2, \\     
    \momentspolynomialmatrix{\cellind} & \text{otherwise},
  \end{array} \right.
\end{align}
for the $\cellind^{\text{th}}$ cell and the case $\spatialorder \geq 3$, that is piece-wise quadratic or higher-degree polynomials, so that the final rows of zeros in the first case indicates
that the coefficients for the higher-order {spatial} basis functions {$\Testfunction[1], \ldots ,\Testfunction[\polydegree]$ are set to zero for}
each moment component.
The absolute value and the inequality are applied componentwise. The label ``scalar'' is used because the limiter is directly applied to each 
scalar component of $\momentscollection{}$.
The function $\minmod{\cdot}$ is the standard minmod function applied componentwise, defined by
\begin{align*}
\minmod{a_1,a_2,a_3} &= \begin{cases}
 \operatorname{sign}(a_1) \min\{|a_1|,|a_2|,|a_3|\} & \text{if } 
 \operatorname{sign}(a_1) = \operatorname{sign}(a_2) = 
 \operatorname{sign}(a_3), \\
0 & \text{else}.
\end{cases}
\end{align*}

The constant $\TVBconstant$ is a problem-dependent estimate of the second derivative, 
though it has to be noted that in \cite{Cockburn1989a} the authors did not find the 
solutions very sensitive to the value chosen for this parameter.

However, it has been found that applying the limiter to the components 
themselves may introduce non-physical oscillations around an otherwise monotonic 
solution \cite{Cockburn1989}.
Instead, the limiter is applied to the local characteristic fields of the 
solution. The flux Jacobian is obtained numerically using finite differences. This can be achieved cheaply since only the last equation of the flux has to be considered, all other components are trivial.

\subsection{Realizability preservation}
\label{sec:RealizabilityPreservation1D}
% % % % % % % % % % % % % % % % % % % % % % % % % % % % % % % % % % % % % % % % % % % % % % % % % % %
% % % % %-----------------------Realizability preservation----------------------- % % % % %
% % % % % % % % % % % % % % % % % % % % % % % % % % % % % % % % % % % % % % % % % % % % % % % % % % %

In order to evaluate the flux-term $\Flux_3(\momentsprojected(\timevar, \z))$ at the spatial quadrature nodes 
$\spatialQuadratureNodes{\cellind}{\spatialQuadratureIndex}$ in the $\cellind^{\text{th}}$ cell, at least $\momentslocal{\cellind}(\spatialQuadratureNodes{\cellind}{\spatialQuadratureIndex}) =: \momentspv{\cellind,\spatialQuadratureIndex} \in 
\RD{\basis}{}$ for each node is necessary\footnote{Although intuition expects $\momentslocal{\cellind}\left(\timevar,\z\right)\in\RD{\basis}{}$ for all $\z\in\cell{\cellind}$, having realizable point values only indeed suffices to preserve realizability of the updated cell means.}.

To prove \thmref{thm:MainTheorem1D} the following rather strong assumption has to be made.\\

\begin{assumption}
\label{ass:collisionopassumption}
For every $\distribution$ satisfying $\moments = \ints{\basis\distribution}$ there exists a $\collisionrealizablepart\in\RD{\basis}{}$ such that the moments of the collision operator $\collisionop$ with respect to the same angular basis $\basis$ can be written as
\begin{align}
\label{eq:collisionopassumption}
\ints{\basis \collision{\distribution}} = \collisionrealizablepart-\moments.
\end{align}
\end{assumption}

This assumption is fulfilled by the integral collision operator \eqref{eq:collisionOperatorLin1D}.

While first-order schemes {(like the Lax-Friedrichs method)} automatically preserve realizability of the cell means \cite{Schneider2015a}, higher-order schemes {($\spatialorder\geq 2$)} typically cannot guarantee this property on their own, as
has been 
observed in the context of the compressible Euler equations (which are
indeed in the hierarchy of minimum-entropy models) in \cite{Zhang2010} and for the $\MN[1]$ model in \cite{Olbrant2012}.\\

It is, however, possible to show that, when the moments at the quadrature nodes are 
realizable, the presented schemes preserve realizability of the cell means 
$\momentscellmean{\cellind}(\timevar)$ under a CFL-type condition. 
With realizable cell means available, a point-wise-realizable polynomial representation can be obtained by
applying a linear scaling limiter which pushes $\momentspv{\cellind,\spatialQuadratureIndex}$ towards the cell mean and thus into the 
realizable set for each quadrature node $\spatialQuadratureNodes{\cellind}{\spatialQuadratureIndex}$. 

Following the arguments in \cite{Zhang2010a,Zhang2011}, this limiter does not
destroy the accuracy of the scheme in case of smooth solutions if $\momentscellmean{\cellind}$
is not on the boundary of the realizable set.
This is verified numerically in \secref{sec:Convergence}. For convenience, the main result of \cite{Schneider2015a,Schneider2015b} is summarized in the following theorem. Note that, since SSP time integrators are used, it suffices to investigate forward Euler steps in time, which are then convexly combined to obtain the {designed order of time integration in the SSP scheme}.

\begin{theorem}[\cite{Schneider2015a,Schneider2015b}]
\label{thm:MainTheorem1D}
Assume that 
\begin{enumerate}[(i)]
 \item for all cells $\cellind \in \{1, 2, \ldots, \ncells\}$ it holds that
  $0 \leq \source(\timepoint{\timeind}, \z),\absorption(\timepoint{\timeind}, \z),\scattering(\timepoint{\timeind}, \z)\in\FiniteElementSpace{\spatialordersource}$, $\spatialordersource\in\N$;
 \item the cell means $\momentscellmeantime{\cellind}{\timeind}$ at time step $\timepoint{\timeind}$ are
  realizable;
 \item at the quadrature nodes of the
   $\spatialQuadratureNumber$-point Gauss-Lobatto rule on each cell $\cell{\cellind}$, $\spatialQuadratureNumber = \ceil{\frac{\spatialorder + \spatialordersource + 1}{2}}$,%
     \footnote{Where $\ceil{\cdot}$ is the ceiling function, that is, it
      returns smallest integer bigger than or equal to its argument. Since the Gauss-Lobatto rule is exact for polynomials of degree $2\spatialQuadratureNumber-3$ this choice guarantees to exactly integrate the occurring polynomials of degree $\left(\spatialorder + \spatialordersource-2\right)$.} the point-wise values of  the moment approximation $\momentsprojected$ (componentwise in $\FiniteElementSpace{\spatialorder}$) are realizable.
\end{enumerate}
Then under the CFL condition
\begin{align}
\label{eq:CFL1D}
 \dtstepsize < \min \left( \cfrac{1}{\maxcrosssection},
  \cfrac{\dzstepsize \spatialQuadratureWeightsRef{1}}{1
  + \dzstepsize \spatialQuadratureWeightsRef{1} \maxcrosssection} \right),
\end{align}
the cell means $\momentscellmeantime{\cellind}{\timeind+1}$ after one forward-Euler step are realizable, where
\begin{align}
\label{eq:sigmatmax}
\maxcrosssection := \max\limits_{\substack{\cellind\in\{1,\ldots,\ncells\}\\\spatialQuadratureIndex\in\{1,\ldots,\spatialQuadratureNumber\}}} \crosssection\left(\timepoint{\timeind},\spatialQuadratureNodes{\cellind}{\spatialQuadratureIndex}\right).
\end{align}
\end{theorem}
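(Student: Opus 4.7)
My plan is to reproduce the argument of \cite{Schneider2015a,Schneider2015b} adapted to the current system \eqref{eq:GeneralHyperbolicSystem1D} with flux \eqref{eq:FluxDefinitions} and Assumption~\ref{ass:collisionopassumption}. First I would test \eqref{eq:dweakform1a} with $\Testfunction[h]\equiv 1$ on $\cell{\cellind}$, which kills the volume flux term and yields a scalar evolution equation for the cell mean $\momentscellmean{\cellind}$. Applying forward Euler and using the Lax-Friedrichs flux \eqref{eq:globalLF} with $\viscosityconstant=1$, the update takes the schematic form
\begin{align*}
\momentscellmeantime{\cellind}{\timeind+1}
= \momentscellmeantime{\cellind}{\timeind}
 - \tfrac{\dtstepsize}{\dzstepsize}\bigl(\numericalFlux(\momentspv{\cellind,\spatialQuadratureNumber},\momentspv{\cellind+1,1}) - \numericalFlux(\momentspv{\cellind-1,\spatialQuadratureNumber},\momentspv{\cellind,1})\bigr)
 + \tfrac{\dtstepsize}{\dzstepsize}\int_{\cell{\cellind}} \Source(\momentspv{\cellind}(\z))\,d\z,
\end{align*}
where I have labelled the two Gauss-Lobatto endpoints of $\cell{\cellind}$ as the first and last quadrature nodes (these endpoints are always nodes of the Gauss-Lobatto rule, which is the key reason for choosing this quadrature).

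Next I would rewrite the right-hand side by first evaluating the source integral using the $\spatialQuadratureNumber$-point Gauss-Lobatto rule. Since $\spatialQuadratureNumber=\ceil{(\spatialorder+\spatialordersource+1)/2}$ and the integrand $\Source(\momentspv{\cellind})$ is at most a polynomial of degree $\spatialorder+\spatialordersource-1$ (by assumption (i) and linearity of $\Source$), the quadrature is exact. I would then also expand the cell mean as $\momentscellmeantime{\cellind}{\timeind}=\sum_{\spatialQuadratureIndex=1}^{\spatialQuadratureNumber}\spatialQuadratureWeightsRef{\spatialQuadratureIndex}\momentspv{\cellind,\spatialQuadratureIndex}$, again using exactness of the Gauss-Lobatto rule for the degree-$\polydegree$ polynomial $\momentspv{\cellind}$. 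Grouping terms by quadrature node, the update becomes a sum over $\spatialQuadratureIndex=1,\dots,\spatialQuadratureNumber$ of contributions that individually depend only on $\momentspv{\cellind,\spatialQuadratureIndex}$ (and, for the two endpoint nodes, additionally on $\momentspv{\cellind\pm 1,\cdot}$ through the numerical flux).

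The core of the argument is then to show that each such contribution is a non-negative multiple of a realizable moment vector, using Assumption~\ref{ass:collisionopassumption} to write $\ints{\basis\collision{\ansatzpv{\cellind,\spatialQuadratureIndex}}} = \collisionrealizablepart_{\cellind,\spatialQuadratureIndex} - \momentspv{\cellind,\spatialQuadratureIndex}$ with $\collisionrealizablepart_{\cellind,\spatialQuadratureIndex}\in\RD{\basis}{}$. For the interior nodes $2\le\spatialQuadratureIndex\le\spatialQuadratureNumber-1$, the contribution has the form $\spatialQuadratureWeightsRef{\spatialQuadratureIndex}\bigl((1-\dtstepsize\crosssection)\momentspv{\cellind,\spatialQuadratureIndex} + \dtstepsize\scattering\collisionrealizablepart_{\cellind,\spatialQuadratureIndex} + \dtstepsize\ints{\basis\source}\bigr)$, which lies in $\RD{\basis}{}$ (up to its positive weight) provided $\dtstepsize\maxcrosssection \le 1$, i.e.\ the first branch of \eqref{eq:CFL1D}. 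For the two boundary nodes, the flux difference combined with $\spatialQuadratureWeightsRef{1}\momentspv{\cellind,1}$ (respectively $\spatialQuadratureWeightsRef{\spatialQuadratureNumber}\momentspv{\cellind,\spatialQuadratureNumber}=\spatialQuadratureWeightsRef{1}\momentspv{\cellind,\spatialQuadratureNumber}$) must be reorganized into a first-order Lax-Friedrichs-type update between neighbouring node values, whose realizability preservation is classical; this is the step I expect to be the delicate bookkeeping, and it is precisely what forces the second, more restrictive branch of \eqref{eq:CFL1D}, since $\spatialQuadratureWeightsRef{1}$ appears as the effective local step. Combining the two constraints and using that $\RD{\basis}{}$ is a convex cone closes the argument.
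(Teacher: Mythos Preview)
The paper does not actually prove this theorem: it is stated as a summary of the main result of \cite{Schneider2015a,Schneider2015b} and no proof is given here. Your sketch faithfully reproduces the Zhang--Shu-type argument used in those references---testing with $\Testfunction[h]\equiv 1$, expanding both the cell mean and the source integral via the exact Gauss--Lobatto rule (whose endpoint nodes absorb the numerical fluxes), and then splitting into interior-node contributions (yielding the constraint $\dtstepsize\maxcrosssection<1$) and endpoint contributions rewritten as first-order Lax--Friedrichs updates (yielding the $\spatialQuadratureWeightsRef{1}$-dependent constraint)---so your plan is correct and matches the cited approach. One minor slip: by assumption~(i) the integrand $\Source(\momentslocal{\cellind})$ has degree at most $\spatialorder+\spatialordersource-2$, not $\spatialorder+\spatialordersource-1$ (cf.\ the footnote in the theorem statement), though this only strengthens the exactness claim.
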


All that remains is to ensure that assumption (iii) in \thmref{thm:MainTheorem1D} is always fulfilled. Due to assumption (ii) and the convexity of the realizable set, this can be achieved using a linear-scaling limiter, pushing the polynomial representation towards the (realizable) cell mean. This approach has been outlined in \cite{Zhang2010,Zhang2010a,Zhang2011a} for the Euler equations and in \cite{Schneider2015a} for two classes of minimum-entropy models.

For ease of notation, time indices are dropped.

Recall the definition \eqref{eqn:solution_form} of $\momentslocal{\cellind}$, given by
\begin{align*}
\momentslocal{\cellind}(\z) = \momentscellmean{\cellind} + \sum_{\polybasisind=1}^{\polydegree}\momentspolynomialcoefficients{\cellind}{\polybasisind} 
\Testfunction[\polybasisind]\left( \frac{\z - \z_\cellind}{\dzstepsize} \right).
\end{align*}

Due to the convexity of the realizable set, if $\momentscellmean{\cellind}$ is 
realizable, then for each quadrature point there exists a $\limitervariable \in 
[0,1]$ such that
\begin{align}
\label{eq:momentslimited}
 \momentslocallimited[\limitervariable]{\cellind}(\spatialQuadratureNodes{\cellind}{\spatialQuadratureIndex}) := \momentslimitedpv[\limitervariable]{\cellind,\spatialQuadratureIndex}  := \limitervariable \momentscellmean{\cellind} + (1 - \limitervariable) \momentspv{\cellind,\spatialQuadratureIndex} 
\end{align}
is realizable.  Indeed, by inserting the definition of $\momentslocal{\cellind}(\spatialQuadratureNodes{\cellind}{\spatialQuadratureIndex})$ from 
above, the limited moment vector can be written as
\begin{align*}
\momentslimitedpv[\limitervariable]{\cellind,\spatialQuadratureIndex} = \momentscellmean{\cellind} + \left(1-\limitervariable\right)\sum_{\polybasisind=1}^{\polydegree}\momentspolynomialcoefficients{\cellind}{\polybasisind}
\Testfunction[\polybasisind]\left( \frac{\spatialQuadratureNodes{\cellind}{\spatialQuadratureIndex} - \z_\cellind}{\dzstepsize} \right),
\end{align*}
thus when limiting is necessary, the higher-order coefficients $\momentspolynomialcoefficients{\cellind}{\polybasisind}$, {$\polybasisind = 1,\ldots,\polydegree$}, 
are damped while the cell mean remains unchanged.\\

The task of the limiter is now to choose for each $\momentslocal{\cellind}$ the minimal value of $\limitervariable_\cellind\in[0,1]$ such that $\momentslocallimited[\limitervariable_\cellind]{\cellind}$ is realizable at all quadrature nodes $\spatialQuadratureNodes{\cellind}{\spatialQuadratureIndex}$. This choice is optimal in the sense that the least information of the original polynomial is lost ($\limitervariable=0$ corresponds to no limiting while $\limitervariable=1$ resembles limiting to first order).\\

\begin{remark}
For readability reasons, the dependence on the cell index $\cellind$ is dropped sometimes throughout the following examples.
\end{remark}

Having the non-linear structure of the full realizable set $\RD{\basis}{}$ in mind, computing the smallest 
$\limitervariable$ such that $\momentslimitedpv[\limitervariable]{\cellind,\spatialQuadratureIndex} \in \RD{\basis}{}$ requires some effort. 

\begin{theorem}
The solution to the limiter problem 
\begin{align*}
\min~ &\limitervariable\\
\text{s.t. }& \momentslimitedpv[\limitervariable]{\cellind,\spatialQuadratureIndex} \in \RD{\fmbasis}{}\\
&\limitervariable\in[0,1]
\end{align*}
requires to calculate the roots of two polynomials of degree at most $\momentorder$.
\end{theorem}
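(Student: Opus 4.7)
The plan is to reduce the constraint $\momentslimitedpv[\limitervariable]{\cellind,\spatialQuadratureIndex} \in \RD{\fmbasis}{}$ to the positive semi-definiteness of a small number of Hankel matrices via \lemref{thm:FullMomentRealizability}, and then exploit the observation that along the convex path $\momentslimitedpv[\limitervariable]{\cellind,\spatialQuadratureIndex} = \limitervariable \momentscellmean{\cellind} + (1-\limitervariable) \momentspv{\cellind,\spatialQuadratureIndex}$ every moment component, and hence every Hankel-matrix entry, is an affine function of $\limitervariable$.

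I would then split into the odd and even case. For $\momentorder = 2\hankelhalfind+1$, realizability is equivalent to $\hankelA(\hankelhalfind) - \hankelB(\hankelhalfind) \succeq 0$ and $\hankelA(\hankelhalfind) + \hankelB(\hankelhalfind) \succeq 0$, where both matrices have size $\hankelhalfind+1$, so their determinants are polynomials in $\limitervariable$ of degree at most $\hankelhalfind+1 \leq \momentorder$. For $\momentorder = 2\hankelhalfind$, the conditions $\hankelA(\hankelhalfind) \succeq 0$ and $\hankelA(\hankelhalfind-1) - \hankelC(\hankelhalfind) \succeq 0$ yield determinants of degrees at most $\hankelhalfind+1$ and $\hankelhalfind$, again both bounded by $\momentorder$. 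These two determinants are precisely the two polynomials named in the claim.

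It remains to argue that the smallest feasible $\limitervariable$ is determined solely by the roots of these two polynomials. Starting at $\limitervariable = 1$, where by assumption $\momentscellmean{\cellind}$ is realizable and both relevant Hankel matrices are PSD, Cauchy interlacing guarantees that the smallest eigenvalue of every principal submatrix is bounded below by the smallest eigenvalue of the full matrix. By continuity of eigenvalues the path therefore leaves a PSD cone exactly when the smallest eigenvalue, equivalently the determinant, first vanishes; no smaller principal minor can turn negative strictly earlier. Consequently, computing $\limitervariable$ reduces to extracting the appropriate roots in $(0,1]$ of the two polynomials identified above and taking their maximum (or returning $0$ when neither polynomial changes sign on $[0,1]$).

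The main obstacle is the degenerate case in which $\momentscellmean{\cellind}$ itself lies on the realizability boundary, as can happen for the atomic measures underlying the Kershaw ansatz. Then one of the determinant polynomials already vanishes at $\limitervariable = 1$, and this root must be recognized as the initial condition rather than an active constraint. One would factor out $(\limitervariable - 1)$ from the affected polynomial and inspect the sign of the reduced polynomial just below $1$ in order to decide whether the path stays in the PSD cone initially; the remaining bookkeeping and univariate root selection is routine and does not increase the degree estimate.
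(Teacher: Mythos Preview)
Your proposal is correct and follows the same route as the paper: the Hankel matrices associated with $\momentslimitedpv[\limitervariable]{\cellind,\spatialQuadratureIndex}$ depend affinely on $\limitervariable$, so the two relevant determinants are polynomials in $\limitervariable$ whose roots locate the crossing of $\partial\RD{\fmbasis}{}$. The paper's justification for why the \emph{largest} root in $[0,1]$ is the answer is shorter than yours: it simply invokes convexity of $\RD{\fmbasis}{}$, so the feasible $\limitervariable$ form an interval containing $1$, and its left endpoint lies on $\partial\RD{\fmbasis}{}$, forcing one Hankel matrix to be singular there. Your Cauchy-interlacing step is therefore unnecessary---the conditions in \lemref{thm:FullMomentRealizability} are already stated for the full Hankel matrices, not for their principal minors, so there is nothing smaller that could fail earlier; continuity of eigenvalues alone (or, more directly, the convexity argument) suffices. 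On the other hand, your degree bounds ($\hankelhalfind+1$ in the odd case, $\hankelhalfind+1$ and $\hankelhalfind$ in the even case) are sharper than the paper's blanket ``degree $\momentorder$'', and your explicit treatment of the no-root case and of the degenerate situation $\momentscellmean{\cellind}\in\partial\RD{\fmbasis}{}$ fills in points the paper leaves implicit.
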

\begin{proof}
Assume that $\momentorder=2\hankelhalfind+1$. Define $\overline{\hankelA}$, $\hankelA$, $\overline{\hankelB}$ and $\hankelB$ to be the Hankel matrices associated with $\momentscellmean{\cellind}$ and $\momentslocal{\cellind}(\spatialQuadratureNodes{\cellind}{\spatialQuadratureIndex})$, respectively. Then the Hankel matrices associated with $\momentslimitedpv[\limitervariable]{\cellind,\spatialQuadratureIndex}$ are 
\begin{align*}
\hankelA^\limitervariable := \limitervariable\overline{\hankelA}+(1-\limitervariable)\hankelA,\\
\hankelB^\limitervariable := \limitervariable\overline{\hankelB}+(1-\limitervariable)\hankelB.
\end{align*}
By assumption (compare \lemref{thm:FullMomentRealizability}), $\overline{\hankelA}\geq \pm \overline{\hankelB}$. This implies that all eigenvalues of $\overline{\hankelA}\mp \overline{\hankelB}$ are non-negative, and therefore $\det\left(\overline{\hankelA}\mp \overline{\hankelB}\right)\geq 0$. Being on the realizability boundary corresponds to having at least one zero eigenvalue, which is equivalent to a vanishing determinant of either $\hankelA^\limitervariable-\hankelB^\limitervariable$ or $\hankelA^\limitervariable+\hankelB^\limitervariable$. Note that $\det\left(\hankelA^\limitervariable\pm\hankelB^\limitervariable\right)$ is a polynomial of degree $\momentorder$ in $\limitervariable$. Since the realizable set is convex, the maximal $\limitervariable$ in $[0,1]$ that is a root of one of the two polynomials is the optimal limiter value. 

The case $\momentorder=2\hankelhalfind$ works analogously.
\end{proof}

\begin{example}
\label{eq:M1Limiter1D}
Realizability conditions for $\momentorder = 1$ are very simple: $\momentcomp{0}\geq \pm\momentcomp{1}$. Plugging in $\momentslimitedpv[\limitervariable]{}$ from \eqref{eq:momentslimited} gives
\begin{align*}
\limitervariable\momentcompcellmean{0}+\left(1-\limitervariable\right)\momentcomp{0} \geq \pm \limitervariable\momentcompcellmean{1}\pm\left(1-\limitervariable\right)\momentcomp{1}.
\end{align*}
Solving these equations for equality (which is equivalent to finding roots of a polynomial of degree $\momentorder = 1$) results in 
\begin{align*}
\limitervariable_\pm = \frac{\momentcomp{0} \mp \momentcomp{1}}{\momentcomp{0} \mp \momentcomp{1} - \momentcompcellmean{0} \pm \momentcompcellmean{1}}.
\end{align*}
\end{example}

\begin{example}
For $\momentorder = 2$ the realizability conditions are given through the Hankel matrices 
\begin{align*}
\hankelA(0) = \momentcomp{0},~
\hankelC(1) = \momentcomp{2},~
\hankelA(1) = \begin{pmatrix}
\momentcomp{0} & \momentcomp{1}\\
\momentcomp{1} & \momentcomp{2}
\end{pmatrix}
\end{align*}
and the conditions $\hankelA(1)\geq 0$ and $\hankelA(0) \geq \hankelC(1)$. The matrices defining the limiter value $\limitervariable$ are given by 
\begin{align*}
D_1(\limitervariable) &= \hankelA^\limitervariable(0)-\hankelC^\limitervariable(1) = \limitervariable(\momentcompcellmean{0}-\momentcompcellmean{2})+(1-\limitervariable)(\momentcomp{0}-\momentcomp{2}),\\
D_2(\limitervariable) &= \hankelA^\limitervariable(1) = \limitervariable\begin{pmatrix}
\momentcompcellmean{0} & \momentcompcellmean{1}\\
\momentcompcellmean{1} & \momentcompcellmean{2}
\end{pmatrix}
+(1-\limitervariable)\begin{pmatrix}
\momentcomp{0} & \momentcomp{1}\\
\momentcomp{1} & \momentcomp{2}
\end{pmatrix}.
\end{align*}
The required polynomials are given by $p_{1,2} (\limitervariable)= \det\left(D_{1,2}(\limitervariable)\right)$, i.e.
\begin{align*}
p_1(\limitervariable) &= \limitervariable(\momentcompcellmean{0}-\momentcompcellmean{2})+(1-\limitervariable)(\momentcomp{0}-\momentcomp{2})\\
p_2(\limitervariable) &= \left( - {\momentcomp{1}}^2 + 2\, \momentcomp{1}\, \momentcompcellmean{1} - {\momentcompcellmean{1}}^2 - \momentcomp{2}\, \momentcompcellmean{0} + \momentcompcellmean{0}\, \momentcompcellmean{2} + \momentcomp{0}\, \left(\momentcomp{2} - \momentcompcellmean{2}\right)\right)\, \limitervariable^2,\\
& \quad+ \left(2\, {\momentcomp{1}}^2 - 2\, \momentcompcellmean{1}\, \momentcomp{1} + \momentcomp{2}\, \momentcompcellmean{0} - \momentcomp{0}\, \left(2\, \momentcomp{2} - \momentcompcellmean{2}\right)\right)\, \limitervariable + \left(\momentcomp{0}\, \momentcomp{2} - {\momentcomp{1}}^2\right).
\end{align*}
Let e.g. $\momentscellmean{} = \left(1,0,\frac13\right)^T$ and $\moments = \left(1,\frac45,\frac15\right)^T$. Then it follows that
\begin{align*}
p_1(\limitervariable) &= \frac45-\frac{2}{15}\limitervariable,\\
p_2(\limitervariable) &=-\frac{11}{25}+\frac{106}{75}\limitervariable-\frac{16}{25}\limitervariable^2,
\end{align*}
which have roots $\limitervariable_1 = 6$, $\limitervariable_{2+} = \frac{11}{6}$ and $\limitervariable_{2-} = \frac38$. Since $\limitervariable_1,\limitervariable_{2+} \notin [0,1]$ it follows that $\limitervariable = \limitervariable_{2-} $ and $\momentslimitedpv[\limitervariable]{} = \left(1,\frac12,\frac14\right)^T$, which indeed satisfies the second-order realizability condition $\momentcomplimitedpv{2}\momentcomplimitedpv{0}\geq \momentcomplimitedpv{1}\momentcomplimitedpv{1}$ with equality. This example is visualized in \figref{fig:K2LimiterExample}.
\end{example}

\begin{figure}[htbp]
\centering
\externaltikz{K2LimiterExample}{\input{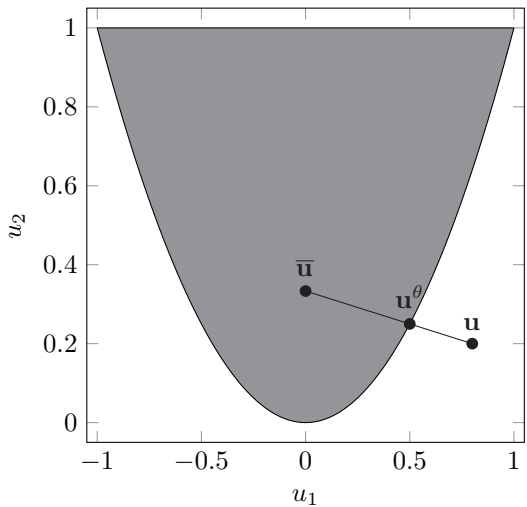}}
\caption{Limiter example for the second-order basis with $\momentcomp{0} = 1$. The realizable set is plotted in grey.}
\label{fig:K2LimiterExample}
\end{figure}

\begin{remark}
To analytically obtain the coefficients for the resulting polynomials in $\limitervariable$ is in general hard. However, this can be avoided using a simple trick. In the odd case evaluate the determinants of $\hankelA^\limitervariable\pm\hankelB^\limitervariable$ at $\momentorder+1$ distinct values of $\limitervariable$, e.g. at the $\momentorder+1$ linearly-spaced values in $[0,1]$. These points uniquely define the desired polynomial. A similar approach can be done in the even case.
\end{remark}

{\begin{remark}
It has been shown in \cite{Schneider2016} that the slope limiter \eqref{eq:slopelimiterscalar} (either evaluated in primitive or conserved variables) always has to be applied before the realizability limiter since the application of the slope limiter may destroy point-wise realizability (and thus \thmref{thm:MainTheorem1D} cannot be applied).\\
Both limiters have to be applied at every stage and step of the SSP time integrators.
\end{remark}}
\section{Numerical experiments}
This section contains numerical convergence results and some often-used benchmark problems for moment models. They serve as a reference for the efficiency of Kershaw closures with a {high number of moments} combined with high-order space-time discretizations. 

{The approximations of highest order in space and time are discretized with }$\spatialorder=7$ on a grid with $50$ cells, the medium order is represented by a $\spatialorder=4$ solution on a grid with $100$ cells and the first-order variant is calculated on a grid with $500$ cells. If not stated otherwise the TVB constant in the modified minmod limiter is set to $\TVBconstant=5$ for $\spatialorder=7$ and $\TVBconstant = 20$ for $\spatialorder = 4$ (compare \cite{Cockburn1989a,Schneider2015a}).

\subsection{Convergence results}
\label{sec:Convergence}

\subsubsection{Manufactured solution}

In general, obtaining analytical solutions for moment models is a hard task. In the case of Kershaw closures it is possible to provide a solution in some special cases. Consider the initial distribution
\begin{align*}
\distributiontzero(\z,\SCheight) = f(\z)\dirac\left(\SCheight-1\right)
\end{align*}
with some positive $f(\z)> 0$. Setting $\absorption=\scattering=0$, the analytical solution of the transport equation \eqref{eq:TransportEquation1D} is given by 
\begin{align*}
\analyticalsolution(\timevar,\z,\SCheight) = f(\z-\timevar)\dirac\left(\SCheight-1\right).
\end{align*}
On the moment level this corresponds to a linear advection with transport speed $1$ since
\begin{align*}
\analyticalmomentcomp{\basisind}(\timevar,\z) = \ints{\SCheight^\basisind \analyticalsolution(\timevar,\z,\SCheight)} = f(\z-\timevar)\qquad\text{ for all } \basisind\in\{0,\ldots,\momentorder\}.
\end{align*}
Since $\dirac\left(\SCheight-1\right)$ can be reproduced exactly by the Kershaw closures \cite{Schneider2015} the moments of the transport solution are also the moments of the Kershaw closure if $\momentorder\geq 1$. For this example the local mass is defined as $f(\z) = \sin(\z)$ on $\Domain = [-\pi,\pi]$. The final time is set to $\tf = 0.2\pi$ and periodic boundary conditions are applied.

Errors are computed in the zeroth moment of the solution $\analyticalmomentcomp{0}(\timevar, \z) := \ints{\analyticalsolution(\timevar, \z, \cdot)}$.
Then $\Lp{1}$- and $\Lp{\infty}$-errors for the zeroth moment $\momentcompprojected{0}(\timevar, \z)$
(that is, the zeroth component of a numerical solution $\momentsprojected$) are
defined as
\begin{equation}
 \LpError{1} = \int_\Domain \left| \analyticalmomentcomp{0}(\tf, \z) - \momentcompprojected{0}(\tf, \z) \right|~d\z
  \quand
\LpError{\infty} = \max_{\z \in \Domain} \left| \analyticalmomentcomp{0}(\tf, \z) - \momentcompprojected{0}(\tf, \z) \right|,
\label{eq:errors}
\end{equation}
respectively. The integral in $\LpError{1}$ is approximated using a 100-point Gauss-Lobatto quadrature rule over each spatial cell $\cell{\cellind}$, and $\LpError{\infty}$ is
approximated by taking the maximum over these quadrature nodes.
The observed convergence order $\MFSconstorder$ is defined by
\begin{equation}
 \frac{\LpError[h_1]{\MFSp}}{\LpError[h_2]{\MFSp}} = \left( \frac{\dzstepsize_1}{\dzstepsize_2} \right)^\MFSconstorder,
\label{eq:conv-order}
\end{equation}
where $\LpError[h_i]{\MFSp}$, $i \in \{1, 2\}$, $\MFSp \in \{1, \infty\}$, is the $\Lp{\MFSp}$-error $\LpError{\MFSp}$ for the
numerical solution using cell size $\dzstepsize_i$.

A convergence table for orders $\spatialorder\in\{2,4,5,6,7\}$ is presented in \tabref{tab:ConvergenceDG}.

\begin{table}[h]
\centering
\begin{tabular}{r r@{.}l c r@{.}l c r@{.}l c r@{.}l c r@{.}l c }
& \multicolumn{3}{c}{$\spatialorder = 2 $}& \multicolumn{3}{c}{$\spatialorder = 4 $}& \multicolumn{3}{c}{$\spatialorder = 5 $}& \multicolumn{3}{c}{$\spatialorder = 6 $}& \multicolumn{3}{c}{$\spatialorder = 7 $}\\
\cmidrule(r){2-4} \cmidrule(r){5-7} \cmidrule(r){8-10} \cmidrule(r){11-13} \cmidrule(r){14-16} 
$\ncells $ & \multicolumn{2}{c}{$\LpError{1}$} & $\MFSconstorder$ & \multicolumn{2}{c}{$\LpError{1}$} & $\MFSconstorder$ & \multicolumn{2}{c}{$\LpError{1}$} & $\MFSconstorder$ & \multicolumn{2}{c}{$\LpError{1}$} & $\MFSconstorder$ & \multicolumn{2}{c}{$\LpError{1}$} & $\MFSconstorder$\\ \midrule 
 10 & 7 & 721e-02 & ---& 1 & 418e-04 & ---& 4 & 449e-06 & ---& 1 & 235e-07 & ---& 2 & 595e-09 & ---\\
20 & 2 & 168e-02 & 1.8& 9 & 004e-06 & 4.0& 1 & 430e-07 & 5.0& 1 & 898e-09 & 6.0& 2 & 107e-11 & 6.9\\
40 & 1 & 017e-02 & 1.1& 5 & 788e-07 & 4.0& 4 & 465e-09 & 5.0& 2 & 951e-11 & 6.0& 1 & 715e-13 & 6.9\\
80 & 2 & 580e-03 & 2.0& 3 & 667e-08 & 4.0& 1 & 401e-10 & 5.0& 4 & 629e-13 & 6.0& 5 & 961e-14 & 1.5\\
160 & 6 & 467e-04 & 2.0& 2 & 296e-09 & 4.0& 4 & 408e-12 & 5.0& 3 & 028e-14 & 3.9& 1 & 159e-13 & -1.0\\
\cmidrule(r){2-4} \cmidrule(r){5-7} \cmidrule(r){8-10} \cmidrule(r){11-13} \cmidrule(r){14-16} 
$\ncells $ & \multicolumn{2}{c}{$\LpError{\infty}$} & $\MFSconstorder$ & \multicolumn{2}{c}{$\LpError{\infty}$} & $\MFSconstorder$ & \multicolumn{2}{c}{$\LpError{\infty}$} & $\MFSconstorder$ & \multicolumn{2}{c}{$\LpError{\infty}$} & $\MFSconstorder$ & \multicolumn{2}{c}{$\LpError{\infty}$} & $\MFSconstorder$\\ \midrule 
 10 & 5 & 978e-02 & ---& 1 & 864e-04 & ---& 6 & 520e-06 & ---& 1 & 750e-07 & ---& 4 & 422e-09 & ---\\
20 & 1 & 592e-02 & 1.9& 1 & 155e-05 & 4.0& 2 & 106e-07 & 5.0& 2 & 836e-09 & 5.9& 3 & 451e-11 & 7.0\\
40 & 4 & 737e-03 & 1.7& 7 & 225e-07 & 4.0& 6 & 603e-09 & 5.0& 4 & 470e-11 & 6.0& 3 & 051e-13 & 6.8\\
80 & 1 & 180e-03 & 2.0& 4 & 517e-08 & 4.0& 2 & 046e-10 & 5.0& 7 & 745e-13 & 5.9& 5 & 906e-14 & 2.4\\
160 & 2 & 930e-04 & 2.0& 2 & 841e-09 & 4.0& 6 & 321e-12 & 5.0& 9 & 459e-14 & 3.0& 5 & 729e-14 & 0.0\\
\end{tabular}
\caption{$\Lp{1}$- and $\Lp{\infty}$-errors and observed convergence order $\MFSconstorder$
for the $\KN[1]$ analytical solution.}
\label{tab:ConvergenceDG}
\end{table}

It can be observed that the expected convergence rates are achieved both in $\Lp{1}$-
and $\Lp{\infty}$-errors. Note that the high-order methods ($\spatialorder\geq 5$) stop converging at an $\Lp{\infty}$-error of magnitude $10^{-14}$. This is also visible in \figref{fig:ConvergenceDGConvergence}, where orders up to $\spatialorder=7$ are plotted together with their corresponding optimal convergence rates (black dashed line).

In \figref{fig:ConvergenceDGEfficiency} the $\Lp{\infty}$-error versus the
computation time (computed on a Intel Core i$7$ CPU with $2.8$ GHz on a single thread) is shown.
Here it is clearly visible that {efficiency rises with increasing order $\spatialorder$}.

\begin{figure}[htbp]
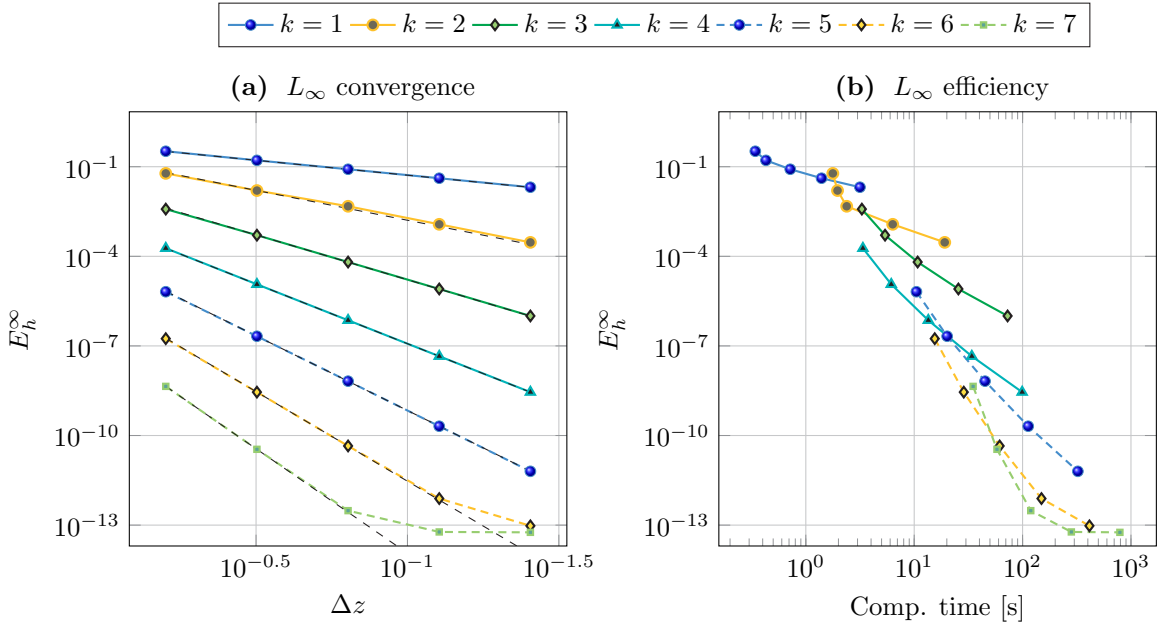

\centering
\settikzlabel{fig:ConvergenceDGConvergence}
\settikzlabel{fig:ConvergenceDGEfficiency}
\externaltikz{Convergence}{\relinput{Images/Convergence}}
\caption{Convergence results in $\Lp{\infty}$-norm for different spatial orders and $\momentorder=1$. Black dashed lines represent the expected convergence.}
\label{fig:ConvergenceDG}
\end{figure}

Similar results can be observed using $\momentorder>1$ (tested for $\momentorder\in\{2,3\}$) but in this extreme case (the moments are always on the first-order realizability boundary) the closure procedure is more prone to numerical errors reducing the overall accuracy.

\subsubsection{Investigation of the realizability limiter}
{
Despite choosing a manufactured solution close to the boundary of
realizability, the realizability limiter was not consistently active in the previous
simulations. Therefore in this section an artificially-defined curve of moment vectors
in space is given, and reconstructed in the finite-element space $\FiniteElementSpace{2}$ of
discontinuous quadratic polynomials. Finally, the realizability limiter is used to move the reconstruction back into the set of numerically-realizable moments $\RD{\basis}{}$.
The convergence of this limited reconstruction is measured as before. This test case has been used before in \cite{Schneider2015a} for quadrature-based minimum-entropy models.
}
Using the Dirac-delta distribution $\dirac = \dirac(\SCheight)$, two
moment vectors
\begin{align*}
 \moments[0] &:= (1 - \MFSgamma) \ints{\fmbasis \dirac(\SCheight - 1)} + \MFSgamma \isotropicmoment
  = (1 - \MFSgamma) \fmbasis(1) + \MFSgamma \isotropicmoment, \\
 \moments[1] &:= 10^{-8} \left( (1 - \MFSgamma) \ints{\fmbasis \dirac(\SCheight + 1)}
  + \MFSgamma \isotropicmoment \right)
  = 10^{-8} \left( (1 - \MFSgamma) \fmbasis(-1) + \MFSgamma \isotropicmoment \right)
\end{align*}
{are chosen, which can lie arbitrarily close to the boundary of the numerically-realizable set. The parameter $\MFSgamma \in [0, 1]$ controls the distance to the boundary.
For $\momentorder > 1$, both $\moments[0]$ and $\moments[1]$ lie on the boundary of the realizable set
when $\MFSgamma = 0$. By definition, $\moments[0]$ and $\moments[1]$ (and any convex
combination thereof) are in $\RD{\basis}{}$ for $\MFSgamma \in [0, 1]$, and
so a realizable curve of moments in space is defined by taking convex
combinations of $\moments[0]$ and $\moments[1]$, i.e. }
\begin{equation}
 \moments(\z) := (1 - \convexscalar(\z)) \moments[0] + \convexscalar(\z) \moments[1], \quad \z \in [-1, 1],
\label{eq:u-limit-test}
\end{equation}
{where $\convexscalar(\z) \in [0, 1]$ is chosen to be}
$$
\convexscalar(\z) := \frac{\cos(\pi \z) + 1}{2}, \quad \z \in [-1, 1].
$$

{To perform the convergence test, $\moments(\z)$ is projected onto $\FiniteElementSpace{2}$ and $\FiniteElementSpace{3}$ for
increasing numbers of cells $\ncells$. Then the realizability
limiter is applied to ensure a realizable polynomial representation.
Errors and observed convergence orders are computed as in \eqref{eq:errors} and
\eqref{eq:conv-order}, respectively.
}
Numerical experiments show that taking $\MFSgamma \in [0, 10^{-2}]$ places the moment curve
$\moments(\z)$ close enough to the boundary of realizability that the realizability
limiter is active for every considered number of cells.

\begin{table}
\begin{tabular}{r r@{.}l c r@{.}l c r@{.}l r@{.}l c r@{.}l c r@{.}l}

 & \multicolumn{8}{c}{$\spatialorder = 2$} & \multicolumn{8}{c}{$\spatialorder = 3$}\\
\cmidrule(r){2-9} \cmidrule(r){10-17}
$\ncells$ & \multicolumn{2}{c}{$\LpError{1}$} & $\MFSconstorder$
 & \multicolumn{2}{c}{$\LpError{\infty}$} & $\MFSconstorder$
 & \multicolumn{2}{c}{$\theta_{\max}$}
 & \multicolumn{2}{c}{$\LpError{1}$} & $\MFSconstorder$
 & \multicolumn{2}{c}{$\LpError{\infty}$} & $\MFSconstorder$
 & \multicolumn{2}{c}{$\limitervariable_{\max}$} \\ \midrule
 
 10 & 9 & 226e-03 & --- & 3 & 095e-02 & --- & 3 & 287e-01& 4 & 895e-04 & --- & 1 & 025e-03 & --- & 8 & 388e-03\\
20 & 2 & 192e-03 & 2.1 & 8 & 096e-03 & 1.9 & 3 & 320e-01& 5 & 545e-05 & 3.1 & 1 & 276e-04 & 3.0 & 2 & 108e-03\\
40 & 5 & 255e-04 & 2.1 & 2 & 047e-03 & 2.0 & 3 & 329e-01& 6 & 745e-06 & 3.0 & 1 & 608e-05 & 3.0 & 5 & 276e-04\\
80 & 1 & 286e-04 & 2.0 & 5 & 131e-04 & 2.0 & 3 & 331e-01& 8 & 373e-07 & 3.0 & 2 & 014e-06 & 3.0 & 1 & 319e-04\\
160 & 3 & 182e-05 & 2.0 & 1 & 284e-04 & 2.0 & 3 & 331e-01& 1 & 045e-07 & 3.0 & 2 & 519e-07 & 3.0 & 3 & 292e-05
\end{tabular}
\caption{$\Lp{1}$- and $\Lp{\infty}$-errors and observed convergence order $\MFSconstorder$
for the zeroth moment of the realizability-limited, piece-wise linear and
quadratic reconstructions of $U(\z)$ from \eqref{eq:u-limit-test} with
$\MFSgamma = 10^{-3}$ and $\momentorder=3$.}
\label{tab:limiter-test-10}
\end{table}

{In \tabref{tab:limiter-test-10} convergence rates are shown for $\MFSgamma = 10^{-3}$ and the $\KN[3]$ model.
These results show the designed convergence order.
In this table the column $\limitervariable_{\max}$ is included, which gives the
maximum value of $\limitervariable$ from the realizability limiter over all spatial
cells.
The non-zero $\limitervariable_{\max}$ in each row indicates that the
realizability limiter is active for every reconstruction.
Similar results can be observed for every moment component. Note that for $\spatialorder\geq 4$ the realizability limiter is no longer active since the approximation quality of the reconstruction is already too good.
}
\subsection{Plane source}
\label{sec:Planesource}
In this test case an isotropic distribution with all mass concentrated in the middle of an infinite domain $\z \in
(-\infty, \infty)$ is defined as initial condition, i.e.
\begin{align*}
 \distributiontzero(\z, \SCheight) = \distributionvacuum + \dirac(\z),
\end{align*}
where the small parameter $\distributionvacuum = 0.5 \times 10^{-8}$ is used to
approximate a vacuum.
In practice, a bounded domain must be used which is large
enough that the boundary should have only negligible effects on the
solution. For the final time $\tf = 1$, the domain is set to $\Domain = [-1.2, 1.2]$ (recall that for all presented models the maximal speed of propagation is bounded in absolute value by one).

At the boundary the vacuum approximation
\begin{align*}
 \distributionboundary(\timevar,\zL,\SCheight) \equiv \distributionvacuum \quand
 \distributionboundary(\timevar,\zR,\SCheight) \equiv \distributionvacuum
\end{align*}
 is used again. Furthermore, the physical coefficients are set to $\scattering \equiv 1$, $\absorption \equiv 0$ and $\source \equiv 0$.

In contrast to \cite{Schneider2015a} a smoothed version of the Dirac is used, similar to \cite{Seibold2014}, given by
\begin{align*}
 \distributiontzero(\z, \SCheight) = \distributionvacuum + \frac{1}{2\sqrt{\pi\sigma}}\exp\left(-\frac{\z^2}{4\sigma}\right),
\end{align*}
with $\sigma = 3.2\cdot 10^{-4}$. To avoid a flattening of the otherwise smooth solution due to the minmod limiter the TVB constant is chosen to be $\TVBconstant=\infty$, completely disabling the slope limiter (but not the realizability limiter).

Some solutions at the final time are shown in \figref{fig:Planesource}, calculated for different spatial orders and resolutions.

\begin{figure}[htbp]
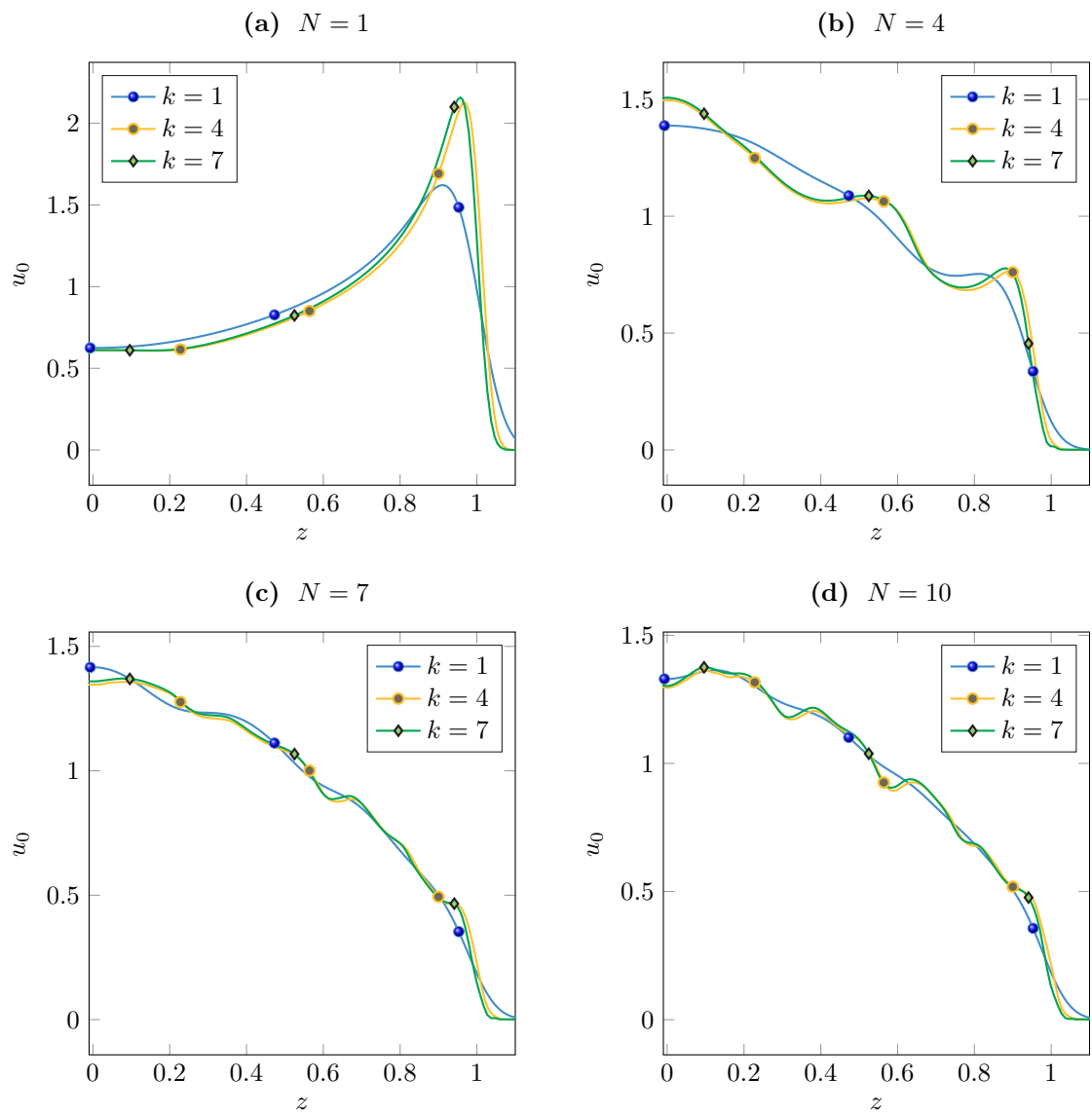

\centering
\externaltikz{Planesource}{\relinput{Images/PlanesourceIsotropicCutsFullMoments}}
\caption{Local particle density $\momentcomp{0}$ in the plane-source test case for different spatial and moment orders.}
\label{fig:Planesource}
\end{figure}

It is visible that despite its much higher resolution ($500$ cells) the first-order solution is a lot more diffusive than the higher-order results ($\spatialorder=4$ with $100$ and $\spatialorder=7$ with $50$ cells). The medium- and high-order solution largely agree though the fourth-order one appears to be slightly more diffusive. This is the case for all presented moment orders.

The activity of the realizability limiter during the simulation is presented in \figref{fig:PlanesourceLimiter}. The value of the limiter variable $\limitervariable$ is plotted in a $\z-\timevar$ diagram showing that the limiter is most active along the shock front. This is consistent with the results in \cite{Schneider2015a} where a similar test has been done for minimum-entropy models. Similarly, increasing the moment order increases the limiter activity by affecting more cells and having higher values in total.

\begin{figure}[htbp]
\centering
\externaltikz{PlanesourceLimiter}{\relinput{Images/PlanesourceLimiter}}
\caption{Realizability-limiter value $\limitervariable$ depending on $\z$ and $\timevar$ in the plane-source test for $\spatialorder = 4$.}
\label{fig:PlanesourceLimiter}
\end{figure}

\subsection{Source beam}
\label{sec:SourceBeam}
Finally, a discontinuous version of the source-beam problem from
\cite{Hauck2013} is presented.
The spatial domain is $\Domain = [0,3]$, and
\begin{gather*}
 \absorption(\z) = \begin{cases}
   1 & \text{ if } \z\leq 2,\\
   0 & \text{ else},
  \end{cases} \quad
 \scattering(\z) = \begin{cases}
   0 & \text{ if } \z\leq 1,\\
   2 & \text{ if } 1<\z\leq 2,\\
   10 & \text{ else}
  \end{cases} \quad
 \source(\z) = \begin{cases}
   1 & \text{ if } 1\leq \z\leq 1.5,\\
   0 & \text{ else},
  \end{cases}
\end{gather*}
with initial and boundary conditions
\begin{gather*}
 \distributiontzero(\z, \SCheight) \equiv \distributionvacuum, \\
 \distributionboundary(\timevar,\zL,\SCheight) = \cfrac{e^{-10^5(\SCheight-1)^2}}{\ints{e^{-10^5(\SCheight-1)^2}}}
 \quand
 \distributionboundary(\timevar,\zR,\SCheight) \equiv \distributionvacuum.
\end{gather*}
The final time is $\tf = 2.5$ and the same vacuum approximation $\distributionvacuum$ as in the plane-source problem is used.

Some solutions at the final time are shown in \figref{fig:SourceBeam}, calculated for different spatial orders and resolutions.
\begin{figure}[htbp]
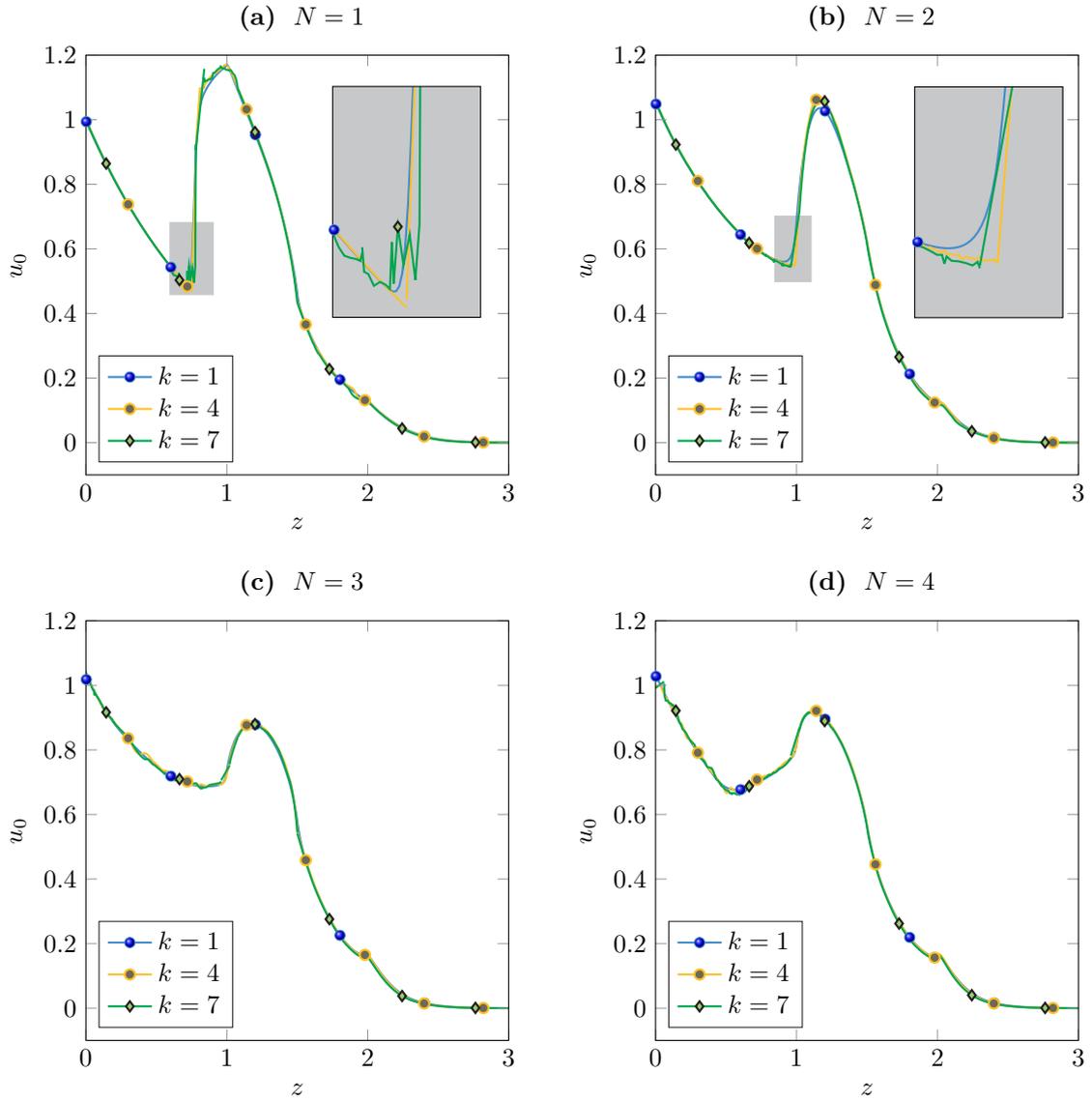

\centering
\externaltikz{SourceBeam}{\relinput{Images/SourceBeamIsotropicCutsFullMoments}}
\caption{Local particle density $\momentcomp{0}$ in the source-beam test case for different spatial and moment orders with zoom-ins.}
\label{fig:SourceBeam}
\end{figure}
In this non-smooth test case the benefit of high order {in space and time }is slightly diminished close to discontinuities. Even more, the seventh-order solution oscillates strongly close to the shock in the $\KN[1]$ solution. This is due to the modified minmod limiter, which is not capable to deal with such high-degree polynomials. Still, the fourth-order solution, calculated on a finer grid, is less diffusive than the first-order result. This is demonstrated in the close-up (grey box). The smoother the solution (which corresponds to increasing moment order $\momentorder$) the less oscillating the seventh-order solution. Furthermore, the different spatial approximations approach each other.
\section{Conclusions and outlook}
In this paper the necessary generalizations of the realizability-preserving discontinuous-Galerkin scheme presented in \cite{Schneider2015a} for full-moment models were derived and applied to the class of Kershaw closures. These models provide a huge gain in efficiency compared to the state-of-the-art minimum-entropy models, since they can be closed (in principle) analytically using the available realizability theory. Using high-order approximations in space and time allowed to further increase this efficiency as demonstrated in a numerical convergence test and multiple benchmark problems.

Future work will have to investigate how to adapt this scheme for different scattering operators like the slightly more complicated (in terms of realizability preservation) Laplace-Beltrami operator. Furthermore, implicit-explicit schemes should be taken into account removing the drawback that the resulting CFL condition depends on the physical parameters $\scattering$ and $\absorption$. Additionally, more sophisticated slope-limiters have to be implemented to further reduce the oscillations due to the minmod limiter.

Finally, the concepts have to be lifted to higher dimensions. While fully three-dimensional first-order variants of Kershaw closures exist \cite{Ker76,Schneider2015c}, no higher-order models or a completely closed theory is available. With this, generalizing the presented scheme is in principle possible and it can be expected that similar efficiency results hold true.
\appendix
\section{Nomenclature}
\begin{center}
{\begin{tabular}{c c c}
Symbol & Use & First occurrence\\
\cmidrule(r){1-1} \cmidrule(lr){2-2} \cmidrule(l){3-3}
$\momentcomp{\basisind}$ & $\basisind$-th scalar moment & \eqref{eq:moments}\\
$\moments$ & Moment vector, either in $\R^{\momentorder+1}$ or the solution of \eqref{eq:MomentSystemUnclosed1D} & \eqref{eq:moments}\\
$\normalizedmoments$ & Normalized moment vector, in $\R^\momentorder$ & \eqref{eq:NormalizedMoments}\\
$\momentsprojected$ & Discretized solution of \eqref{eq:MomentSystemUnclosed1D}, a vector of piecewise polynomials & \eqref{eq:dweakform1}\\
$\momentslocal{\cellind}$ & Restriction of $\momentsprojected$ to the $\cellind$-th cell, a vector of polynomials & \eqref{eqn:solution_form}\\
$\momentspolynomialcoefficients{\cellind}{\polybasisind}$ & $\polybasisind$-th coefficient vector (in $\R^{\momentorder+1}$) of the polynomial $\momentslocal{\cellind}$ wrt. the Legendre basis & \eqref{eqn:solution_form}\\
$\momentscellmean{\cellind}$ & Cell mean of the $\cellind$-th vector of polynomials & \eqref{eq:Cellmean} \\
$\momentspolynomialmatrix{\cellind}$ & Collection of all coefficient vectors $\momentspolynomialcoefficients{\cellind}{\polybasisind}$ in the $\cellind$-th cell & \eqref{eq:momentspolynomialmatrix}\\
$\momentspv{\cellind,\spatialQuadratureIndex}$& Evaluation of $\momentslocal{\cellind}$ at the quadrature node $\spatialQuadratureNodes{\cellind}{\spatialQuadratureIndex}$&\secref{sec:RealizabilityPreservation1D}\\
$\momentslocallimited{\cellind}$ & Realizability-limited version of $\momentslocal{\cellind}$ & \eqref{eq:momentslimited} \\
$\momentslimitedpv[\limitervariable]{\cellind,\spatialQuadratureIndex}$ & Evaluation of $\momentslocallimited{\cellind}$ at the quadrature node $\spatialQuadratureNodes{\cellind}{\spatialQuadratureIndex}$ & \eqref{eq:momentslimited}
\end{tabular}}
\end{center}

% Bibliography
%%%%%%%%%%%%%%
\bibliographystyle{siam}
\bibliography{bibliography}

\end{document}